\DeclareMathOperator{\conex}{\ds\mathlarger{\mathlarger{\nabla}}}
\DeclareMathOperator{\tr}{tr}
\DeclareMathOperator{\dd}{d}
\newcommand{\Q}{\mathfrak{Q}}
\newcommand{\Hc}{\mathcal{H}}
\DeclareMathOperator{\Hess}{\nabla^2}
\DeclareMathOperator{\dist}{dist}
\DeclareMathOperator{\diam}{diam}
\DeclareMathOperator{\R}{\mathbb{R}}
\DeclareMathOperator{\HH}{\mathbb{H}}
\DeclareMathOperator{\Ricc}{Ricc}
\newcommand{\W}{\Omega}
\newcommand{\norm}[1]{\left\| #1 \right\|}
\newcommand{\modulo}[1]{\left| #1 \right|}
\newcommand{\cyl}{\mathcal{C}}
\newcommand{\ocyl}{\tilde{\mathcal{C}}}
\newcommand{\cl}{\mathscr{C}}
\newcommand{\ds}{\displaystyle}
\newcommand{\Eum}{\ds\mathsmaller{\frac{\partial}{\partial x_1}}}
\newcommand{\Ei}{\ds\mathsmaller{\frac{\partial}{\partial x_i}}}
\newcommand{\Ej}{\ds\mathsmaller{\frac{\partial}{\partial x_j}}}
\newcommand{\Ek}{\ds\mathsmaller{\frac{\partial}{\partial x_k}}}
\newcommand{\Dz}{\partial_z}
\newcommand{\Di}{\partial_i}
\newcommand{\Dk}{\partial_k}
\newcommand{\Dij}{\partial_{ij}}
\newcommand{\Dii}{\partial_{ii}}
\newcommand{\Dkk}{\partial_{kk}}
\newcommand{\Dkkum}{\partial_{kk1}}
\newcommand{\Dum}{\partial_1}
\newcommand{\Dumk}{\partial_{1k}}
\newcommand{\Dumi}{\partial_{1i}}
\newcommand{\Dumum}{\partial_{11}}
\newcommand{\Dumumum}{\partial_{111}}
\newcommand{\Dumij}{\partial_{1ij}}
\newcommand{\Dumii}{\partial_{1ii}}
\DeclareMathOperator{\Gr}{Gr}
\newcommand{\cYY}{\overline{\conex}_Y Y}
\newcommand{\nH}{\overline{\nabla} H}
\newcommand{\oM}{\overline{M}}
\newcommand{\oconex}{\overline{\conex}}
\newcommand{\oRicc}{\overline{\Ricc}}
\newcommand{\escalar}[2]{{\left\langle #1,#2 \right\rangle}}
\newcommand{\pushright}[1]{\ifmeasuring@#1\else\omit\hfill$\displaystyle#1$\fi\ignorespaces}
\newcommand{\pushleft}[1]{\ifmeasuring@#1\else\omit$\displaystyle#1$\hfill\fi\ignorespaces}
\newtheorem{teo}{Theorem}
\newtheorem{prop}[teo]{Proposition}
\newtheorem{cor}[teo]{Corollary}
\newtheorem*{teo*}{Theorem}
\theoremstyle{definition}
\newtheorem{obs}[teo]{Remark}
\def\th@plain{%
  \thm@notefont{}
  \itshape 
}
\def\th@definition{%
  \thm@notefont{}
  \normalfont 
}
\let\oldr@@t\r@@t
\def\r@@t#1#2{%
\setbox0=\hbox{$\:\oldr@@t#1{#2\,}$}\dimen0=\ht0
\advance\dimen0-0.2\ht0
\setbox2=\hbox{\vrule height\ht0 depth -\dimen0}%
{\box0\lower0.4pt\box2}}
\LetLtxMacro{\oldsqrt}{\sqrt}
\renewcommand*{\sqrt}[2][]{\oldsqrt[#1]{#2}}
\newcommand\blfootnote[1]{%
  \begingroup
  \renewcommand\thefootnote{}\footnote{#1}%
  \addtocounter{footnote}{-1}%
  \endgroup
}
\begin{document}
\title{Prescribing the curvature to Killing Graphs}
\author{Yunelsy N. Alvarez}
\date{}
\maketitle

\blfootnote{\emph{2000 AMS Subject Classification:} 53C42, 49Q05, 35J25, 35J60.}
\blfootnote{\emph{Keywords and phrases:} {mean curvature equation;} {Dirichlet problems;} {distance function;} {hyperbolic space;} {maximum principle;} {Ricci curvature.}}
\blfootnote{The author was financed by the Coordenação de Aperfeiçoamento de Pessoal de Nível Superior - Brasil (CAPES) - Finance Code 001.}
\blfootnote{\ ~ \\
Yunelsy N. Alvarez\\
Departamento de Matemática, Instituto de Matemática e Estadística, Universidade de São Paulo\\
São Paulo, Brazil,  CEP 05508-090 \\
Email addresses: ynapolez@gmail.com; ynalvarez@usp.br
}

\begin{abstract}
In this work we prove the existence and uniqueness of Killing graphs with prescribed mean
curvature considering functions which are not necessarily constant along the flow lines of the Killing vector field.
\end{abstract}


\section{Introduction}
\label{secIntroducao}


Let $\overline{M}$ be a complete Riemannian manifold of dimension $n+1$, $n\geq 2$. Let us suppose that $\overline{M}$ is endowed with a non-singular Killing vector field $Y$ whose orthogonal distribution is integrable. We fix some integral leaf $M$ and we denote by $\Phi:M\times\R\rightarrow\overline{M}$ the flow generated by $Y$. The Killing graph of a function $u$ defined over a bounded domain $\W\subset M$, firstly defined by Dajczer-Hinojosa-Lira in \cite{Dajczer2008}, is the hypersurface
\begin{equation}
\Gr u =\{\Phi(x,u(x));\ x\in\W\}.
\label{eq:KillingGraph}
\end{equation}

If $H(\Phi(x,u(x)))$ is the mean curvature of $\Sigma$ at each point $\Phi(x,u(x))$, then $u$ necessarily satisfies the equation 
\begin{equation}\label{operador_minimo_1}
W^2\Delta u -\Hess u(\nabla u,\nabla u)-\rho^{-2}\left(\rho^{-2}+W^2\right)\escalar{\cYY}{\nabla u} = nH(\Phi(x,u))W^3,
\end{equation}
where 
\begin{equation}
\rho=\rho(x)=\norm{Y}_{\overline{M}},
\label{eq:rho}
\end{equation}
\begin{equation}
W=\sqrt{\rho^{-2}+\norm{\nabla u}^2}.
\label{eq:theW}
\end{equation} 

The main goal in this work is to study the Dirichlet problem for equation \eqref{operador_minimo_1}. Before to state the principal theorem we need to fix the notation. 

\bigskip
\noindent\textbf{Notation.} 
\begin{enumerate}
\item {Geometric entities on $\oM$ will be marked with a bar and any geometric objects without this superscript will be assumed to lie on $M$}.

\item For an hypersurface $\Gamma$ in $M$, we called the hypersurface 
$$ \overline{\Gamma}=\{\Phi(y,z),\ y\in\Gamma, \ z\in\R\}\subset \overline{M}$$
the translation of $\Gamma$ along the flow of $Y$. 
In particular, for a bounded domain $\W\subset M$
\begin{equation}
\cyl =\{\Phi(y,z);\ y\in\partial\W,\ z\in\R\}
\label{eq:cyl}
\end{equation}
is the cylinder obtained by ``translating'' $\partial\W$ along the flow lines, 
\begin{equation}
\ocyl =\{\Phi(x,z);\ x\in\overline{\W},\ z\in\R\}
\label{eq:cyl_solid}
\end{equation}
is the ``solid'' cylinder bounded by $\cyl$ 
and, for $\delta>0$,
\begin{equation}
\ocyl({\delta}) =\{\Phi(x,z);\ x\in\overline{\W},\ -{\delta}\leq z\leq {\delta}\}
\label{eq:cyl_solid_part}
\end{equation}
is the solid cylinder ``between'' the hypersurfaces $\Phi_{-\delta}(M)$ and $\Phi_{\delta}(M)$. 

\item {The set of points in $\W$ having a unique nearest point to $\partial\W$ will be denoted by $\W_0$. Besides, for each $x\in \W_0$, $y(x)$ is the point in $\partial\W$ realizing the distance $\dd(x):=\dist (x,\partial\W)$. Furthermore, $\gamma_y(t)$, $0\leq t < \tau(y)$, is the inner normal geodesic to the boundary, being that $\gamma_y(\tau(y))$ is the point of $\gamma_y$ in $\partial\W_0\setminus \partial\W$. }

\item For every hypersurface $\Gamma$ in $M$ we will denote by $\Hc_{\Gamma}$ the mean curvature of $\Gamma$ in $M$, and $\Hc_{\overline{\Gamma}}$ will denote the mean curvature of its translation $\overline{\Gamma}$ in $\overline{M}$. 

\item Let  $\rho_i=\inf\limits_{\W}\rho$ and $\rho_s=\sup\limits_{\W}\rho$.

\item Over $\cl^2(\overline{\W})$ we define the operators
\begin{equation}\label{operador_Q}
\Q u:=W^2\Delta u -\Hess u(\nabla u,\nabla u)-\rho^{-2}\left(\rho^{-2}+W^2\right)\escalar{\cYY}{\nabla u} - nH(\Phi(x,u))W^3.
\end{equation}
and
\begin{equation}\label{operador_Qsigma}
\Q_{\sigma} u:=W^2\Delta u -\Hess u(\nabla u,\nabla u)-\rho^{-2}\left(\rho^{-2}+W^2\right)\escalar{\cYY}{\nabla u} - n\sigma H(\Phi(x,u))W^3.
\end{equation}
where $\sigma \in[0,1]$. 
\end{enumerate}
\bigskip

Having set the notation, we state the principal results of this work:
\begin{teo}\label{T_exist_Ricci}
Let $\Omega \subset M$ be a $\cl^{2,\alpha}$ bounded domain for some $\alpha\in(0,1)$ and $\varphi\in\cl^{2,\alpha}(\overline{\W})$. 
Let $H\in\cl^{1,\alpha}(\ocyl)$ satisfying 
\begin{equation}
\escalar{\nH}{Y}_{\oM}\geq 0 \mbox{ in } \ocyl
\label{eq:HY_intro}
\end{equation}
and
\begin{equation}\label{cond_H_Ricci_exist}
\norm{\nH\left(\Phi(x,\varphi(y(x)))\right)}_{\oM}\leq \left(H\left(\Phi(x,\varphi(y(x)))\right)\right)^2 + \frac{\oRicc_x}{n} \ \ \forall \ x\in\W_0.
\end{equation}
If
\begin{equation}\label{StrongSerrinCondition_mainThm}
\Hc_{\cyl}(y)\geq \sup_{\sigma\in[0,1]}\modulo{H(\Phi(y,\sigma\varphi(y)))} \ \forall \ y\in\partial\W, 
\end{equation}
then there exists a unique function $u\in\cl^{2,\alpha}(\overline{\W})$ satisfying $u|_{\partial\W}=\varphi$ and whose Killing graph has mean curvature $H$. 
\end{teo}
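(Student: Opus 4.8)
The plan is to solve the Dirichlet problem for the quasilinear elliptic equation $\Q u=0$ with $u|_{\partial\W}=\varphi$ by the classical method of continuity, using the one-parameter family $\Q_\sigma$ defined in \eqref{operador_Qsigma}. One considers the set $S=\{\sigma\in[0,1] : \text{the problem } \Q_\sigma u=0,\ u|_{\partial\W}=\varphi \text{ has a } \cl^{2,\alpha} \text{ solution}\}$. For $\sigma=0$ the equation is the minimal (zero prescribed curvature) Killing graph equation, which is known to be solvable, so $S\neq\emptyset$. The operators $\Q_\sigma$ are uniformly elliptic on bounded subsets of the $\cl^1$ norm and have $\cl^{1,\alpha}$ coefficients, so openness of $S$ follows from the implicit function theorem in Hölder spaces (the linearization is an invertible linear elliptic operator by the Fredholm alternative, once we know it has trivial kernel, which follows from the maximum principle since the coefficient of $u$ in the linearization has the right sign owing to \eqref{eq:HY_intro}). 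Closedness of $S$ — the heart of the matter — is obtained from a priori estimates in $\cl^{1,\alpha}(\overline\W)$ independent of $\sigma$, which by Schauder theory upgrade to $\cl^{2,\alpha}$ bounds, so $S$ is closed and hence $S=[0,1]$; the case $\sigma=1$ gives the desired $u$. Uniqueness is separate and follows from the comparison principle for $\Q$ applied to two solutions with the same boundary data, again using \eqref{eq:HY_intro} to control the zero-order dependence on $u$.

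The a priori estimates break into the standard three stages. First, the height estimate $\sup_{\overline\W}|u|\leq C$: one builds barriers from the translated leaves $\Phi_{\pm\delta}(M)$ and the geometry of the cylinder, using \eqref{eq:HY_intro} so that the mean curvature of $\Phi_z(M)$ varies monotonically in $z$ and the family of leaves can be used on both sides. Second, the boundary gradient estimate $\sup_{\partial\W}|\nabla u|\leq C$: here one constructs upper and lower barriers of the form $\varphi + \psi(\dd(x))$ near $\partial\W$, where $\psi$ is a suitable concave function of the distance to the boundary; the condition \eqref{StrongSerrinCondition_mainThm} relating $\Hc_{\cyl}$ to the sup of $|H|$ along the relevant segment is exactly the Serrin-type condition that makes these barriers work — it guarantees the cylinder $\cyl$ is "mean convex enough" relative to the prescribed curvature to trap the graph. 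Third, the interior/global gradient estimate $\sup_{\overline\W}|\nabla u|\leq C$: this is where \eqref{cond_H_Ricci_exist} enters, a condition bounding $\|\nH\|$ by $H^2 + \oRicc/n$ evaluated along the geodesic to the boundary; one differentiates the equation, applies the maximum principle to an auxiliary function like $\log W + $ (a function of $u$ or of $\dd$), and the Ricci-curvature term from the Bochner-type computation together with the bound on $\nabla H$ closes the estimate.

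The main obstacle I expect is the interior gradient estimate, specifically organizing the tangential-gradient computation for a Killing graph so that the Ricci curvature of the ambient $\oM$ appears with the correct sign and magnitude to absorb the $\|\nH\|$ term; this requires carefully relating the second fundamental form of $\Gr u$, the geometry of the Killing field $Y$ (through $\rho$ and $\cYY$), and the ambient curvature, and then choosing the test function's auxiliary term (presumably involving $\dd(x)$ and $y(x)$, which is why \eqref{cond_H_Ricci_exist} is phrased along normal geodesics to $\partial\W$). A secondary technical point is verifying that the dependence of $H$ on the $\R$-variable (the height $z=u(x)$), which is the novelty here compared to the height-independent case, does not spoil the maximum principle arguments — condition \eqref{eq:HY_intro} is precisely what keeps the zero-order term cooperative and must be threaded carefully through all three estimates. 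Once the $\sigma$-uniform $\cl^{1,\alpha}$ bound is in hand, the rest is the routine continuity-method and Schauder machinery.
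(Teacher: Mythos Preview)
Your overall architecture is right (a one-parameter family, three-stage a priori estimates, Leray--Schauder/continuity, uniqueness from the comparison principle via \eqref{eq:HY_intro}), but you have misidentified where the key hypothesis \eqref{cond_H_Ricci_exist} enters, and your height-estimate barrier does not work.

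You say the Ricci-type condition \eqref{cond_H_Ricci_exist} is used in the interior gradient estimate. In the paper it is not: the global gradient bound (Theorem~\ref{teo_Est_global_gradiente}) uses only \eqref{eq:HY_intro}, via a maximum principle applied to $w=\norm{\nabla u}e^{Au}$. Condition \eqref{cond_H_Ricci_exist} is instead the engine of both the \emph{height} estimate and the \emph{boundary} gradient estimate. Its role is geometric: together with the Serrin condition on $\partial\W$ it forces the mean curvature of the parallel Killing cylinders $\cyl_{\dd}$ to be monotone along inner normal geodesics (this is Lemma~6 of \cite{alvarez2019existence}), so that the inequality $\Hc_{\cyl_{\dd}}(x)\geq |H(\Phi(x,\varphi(y(x))))|$ propagates from $\partial\W$ into $\W_0$. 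This propagated inequality is precisely what makes the distance-function barriers $w=\phi\circ\dd+\sup_{\partial\W}|\varphi|$ (height) and $w=\pm\psi\circ\dd+\varphi$ (boundary gradient) into genuine super/subsolutions. The phrasing of \eqref{cond_H_Ricci_exist} along normal geodesics via $y(x)$ is explained by this use, not by any interior computation.

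Your proposed height barrier from translated leaves $\Phi_{\pm\delta}(M)$ fails: the Killing graph of a constant function has mean curvature $0$ (set $\nabla u=0$ in \eqref{operador_minimo_1}), so leaves cannot trap a graph of prescribed nonzero $H$, and condition \eqref{eq:HY_intro} does not make ``the mean curvature of $\Phi_z(M)$ vary monotonically in $z$'' in any useful sense. The paper's height estimate is instead built, like the boundary estimate, from the distance $\dd$ to $\partial\W$ and relies essentially on the cylinder-curvature monotonicity just described. A minor further point: the paper runs the family with boundary data $\sigma\varphi$ (not $\varphi$), which is why \eqref{StrongSerrinCondition_mainThm} involves $\sup_{\sigma\in[0,1]}|H(\Phi(y,\sigma\varphi(y)))|$; with your fixed boundary data you would also have to justify solvability at $\sigma=0$ separately.
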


%
\begin{cor}\label{T_exist_Ricci_cor}
Let $\Omega \subset M$ be a $\cl^{2,\alpha}$ bounded domain for some $\alpha\in(0,1)$. 
Let $H\in\cl^{1,\alpha}(\ocyl)$ satisfying 
\begin{equation}
\escalar{\nH}{Y}_{\oM}\geq 0 \mbox{ in } \ocyl, 
\label{eq:HY_intro_cor}
\end{equation}
and
\begin{equation}\label{cond_H_Ricci_exist_cor}
\sup_{z\in\R}\norm{\nH\left(\Phi(x,z)\right)}_{\oM}\leq \inf_{z\in\R}\left(H\left(\Phi(x,z)\right)\right)^2 + \frac{\oRicc_x}{n} \ \ \forall \ x\in\W_0.
\end{equation}
If
\begin{equation}\label{StrongSerrinCondition_mainThm_cor}
\Hc_{\cyl}(y)\geq \sup\limits_{z\in\R}\modulo{H(\Phi(y,z))} \ \forall \ y\in\partial\W,
\end{equation}
then for every $\varphi\in\cl^{2,\alpha}(\overline{\W})$ there exists a unique function $u\in\cl^{2,\alpha}(\overline{\W})$ satisfying $u|_{\partial\W}=\varphi$ and whose Killing graph has mean curvature $H$. 
\end{cor}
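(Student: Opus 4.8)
The plan is to deduce Corollary \ref{T_exist_Ricci_cor} from Theorem \ref{T_exist_Ricci} by showing that, under the stronger hypotheses \eqref{cond_H_Ricci_exist_cor} and \eqref{StrongSerrinCondition_mainThm_cor}, the hypotheses \eqref{cond_H_Ricci_exist} and \eqref{StrongSerrinCondition_mainThm} of the theorem hold for \emph{every} prescribed boundary datum $\varphi\in\cl^{2,\alpha}(\overline{\W})$. The point of \eqref{cond_H_Ricci_exist_cor} and \eqref{StrongSerrinCondition_mainThm_cor} is precisely that they are uniform in the flow parameter $z$, so all the $\varphi$-dependent quantities appearing in Theorem \ref{T_exist_Ricci} get bounded by their $z$-suprema/infima regardless of what $\varphi$ actually is.

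First I would note that hypothesis \eqref{eq:HY_intro_cor} is literally the same as \eqref{eq:HY_intro}, so nothing is needed there. Next, fix $x\in\W_0$ and set $z_0=\varphi(y(x))$, so that the point $\Phi(x,\varphi(y(x)))$ appearing in \eqref{cond_H_Ricci_exist} is exactly $\Phi(x,z_0)$. Then
\[
\norm{\nH(\Phi(x,z_0))}_{\oM}\leq \sup_{z\in\R}\norm{\nH(\Phi(x,z))}_{\oM}
\leq \inf_{z\in\R}\bigl(H(\Phi(x,z))\bigr)^2+\frac{\oRicc_x}{n}
\leq \bigl(H(\Phi(x,z_0))\bigr)^2+\frac{\oRicc_x}{n},
\]
where the first and last inequalities are trivial and the middle one is \eqref{cond_H_Ricci_exist_cor}. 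This is exactly \eqref{cond_H_Ricci_exist}. Finally, for the Serrin-type condition, fix $y\in\partial\W$; for every $\sigma\in[0,1]$ the number $\sigma\varphi(y)$ is a real number, hence $\modulo{H(\Phi(y,\sigma\varphi(y)))}\leq \sup_{z\in\R}\modulo{H(\Phi(y,z))}\leq \Hc_{\cyl}(y)$ by \eqref{StrongSerrinCondition_mainThm_cor}; taking the supremum over $\sigma\in[0,1]$ gives \eqref{StrongSerrinCondition_mainThm}. Thus all hypotheses of Theorem \ref{T_exist_Ricci} are verified and the existence and uniqueness of $u\in\cl^{2,\alpha}(\overline{\W})$ with $u|_{\partial\W}=\varphi$ and prescribed mean curvature $H$ follows immediately.

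Since this is a direct reduction, there is essentially no obstacle: the only thing to be careful about is making sure the $\cl^{1,\alpha}$ regularity of $H$ on $\ocyl$ (as opposed to just $\ocyl(\delta)$ for some $\delta$) is what Theorem \ref{T_exist_Ricci} consumes — here it is automatic since the corollary hypothesizes $H\in\cl^{1,\alpha}(\ocyl)$, and the a priori height estimate in the proof of Theorem \ref{T_exist_Ricci} is what localizes the relevant portion of the cylinder. One could also remark that \eqref{cond_H_Ricci_exist_cor} and \eqref{StrongSerrinCondition_mainThm_cor} are the natural hypotheses to impose when one wants solvability for \emph{all} boundary data simultaneously, and that they reduce to the classical conditions of Dajczer--Hinojosa--Lira when $H$ is constant along the flow lines of $Y$.
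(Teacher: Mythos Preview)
Your proposal is correct and matches the paper's approach: the paper states the corollary immediately after Theorem~\ref{T_exist_Ricci} without a separate proof, treating it as a direct consequence obtained exactly as you describe, by observing that the uniform-in-$z$ hypotheses \eqref{cond_H_Ricci_exist_cor} and \eqref{StrongSerrinCondition_mainThm_cor} imply the $\varphi$-dependent hypotheses \eqref{cond_H_Ricci_exist} and \eqref{StrongSerrinCondition_mainThm} for every $\varphi$.
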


Existence of graphs with prescribed mean curvature is a subject that has been studied for more than a hundred years. The pioneers were Berstein \cite{Bernstein}, Douglas \cite{Douglas1931}, Radó \cite{Rado1930} and Finn \cite{Finn1965} in the three-dimensional Euclidean space. Subsequently, Jenkins-Serrin \cite{Serrin1968} and Serrin \cite{Serrin} studied the problem in higher dimension. 

One conclusion we derive from those works is that there exists a sharp geometric condition to be satisfied by $\W$ in order to have a solution of the Dirichlet problem for the prescribed mean curvature equation for arbitrary boundary values (with further assumption concerning the regularity)\footnote{Sharp in the sense that if this condition fails we can construct a $\cl^{\infty}$ function over $\partial\W$ that can not be the boundary values of any solution of the corresponding mean curvature equation.}. 
For example, 
$\W$ must be mean convex to construct minimal graphs in $\R^{n+1}$ for each smooth boundary 
(see {\cite[Th. 1 p. 171]{Serrin1968}}). In the more general case, $H=H(x)$, Serrin proved that there exists a graph with mean curvature $H$ for arbitrarily given $\cl^2$ boundary values if and only if 
\begin{equation}\label{SerrinCondition}
(n-1)\Hc_{\partial\W}(y)\geq n\modulo{H(y)} \ \forall \ y\in\partial\W,
\end{equation}
provided the function $H\in\cl^1(\overline{\W})$ satisfies an additional (not geometric) hypothesis (see {\cite[Th. p. 484]{Serrin}}). 

{The present author and Sa Earp in \cite{artigonaoexist, alvarez2019existence} have generalized recently the aforementioned result of Serrin to the Riemannian product $M\times\R$.} 
We proved that for arbitrary smooth boundary data there exists a vertical graphs in $M\times\R$ with prescribed mean curvature $H=H(x,z)$ (satisfying some additional hypothesis) when 
\begin{equation}\label{StrongSerrinCondition}
(n-1)\Hc_{\partial\W}(y)\geq n \sup\limits_{z\in\R}\modulo{H\left(y,z\right)} \ \forall \ y\in\partial\W.
\end{equation}
Besides, we proved that \eqref{StrongSerrinCondition} is sharp also in this context. 


Previously, Dajczer-Hinojosa-Lira had already introduced the concept of Killing graphs. 
In this setting they proved (see {\cite[T. 1 p. 232]{Dajczer2008}}) that, 
for $H\in\cl^{\alpha}(\W)$ satisfying 
\begin{equation}\label{ricci_Djaczer}
\oRicc\geq - n \ds\inf_{\partial\W} \Hc_{\cyl}^2. 
\end{equation}
and $\varphi\in\cl^{2,\alpha}(\partial\W)$ given, 
there exists a unique function $u\in\cl^{2,\alpha}(\overline{\W})$ satisfying $u|_{\partial\W}=\varphi$ whose Killing graph has mean curvature $H$  if
\begin{equation}\label{serrin_Djaczer}
\ds\sup_{\W} \modulo{H} \leq \inf_{\partial\W} \Hc_{\cyl}.
\end{equation}

Theorem \ref{T_exist_Ricci} improves this result of Dajczer-Hinojosa-Lira in two senses. First of all because we are considering prescribed mean curvature functions that are not necessarily constant along the flow lines ($H=H(\Phi(x,z))$). Hypothesis \eqref{eq:HY_intro} guarantees that the maximum principle for operator $\Q$ holds. 

Secondly, because inequality \eqref{serrin_Djaczer} is weakened to a pointwise condition to be satisfied along the boundary. Indeed, if $H=H(x)$ inequality \eqref{StrongSerrinCondition_mainThm} becomes
\begin{equation}\label{StrongSerrinCondition_mainThm_Hx}
\Hc_{\cyl}(y)\geq \modulo{H(y)} \ \forall \ y\in\partial\W. 
\end{equation}
The key is to replace the additional geometric restriction \eqref{ricci_Djaczer} on $\W$ by an additional hypothesis on the function $H$ (see \eqref{cond_H_Ricci_exist}). 

We notice that, \eqref{StrongSerrinCondition_mainThm} seems to be the natural generalization of conditions \eqref{SerrinCondition} and \eqref{StrongSerrinCondition} because, analogously to the $\R^{n+1}$ and $M\times\R$ cases, it means that the mean curvature of the Killing cylinder must be greater than the mean curvature of the Killing graph of the function $u$ at each point of the intersection between them.  
The clues were given to us by two works 
whose focuses were existence of graphs-like with prescribed mean curvature in the half-space setting of the hyperbolic space $\HH^{n+1}$.

P.-A Nitsche \cite{Nitsche2002} studied radial graphs over bounded domains in the spherical cap of euclidean radius 1. 
He proved that there exists a graph over $\W$ with prescribed mean curvature a function $H(x)$ taking values on $[0,1]$ for arbitrary continuous boundary values if $\modulo{H(y)}<\Hc_{C}(y)$ everywhere on $\partial\W$; {while if $\modulo{H(y)}>\Hc_{C}(y)$ for some $y\in\partial\W$, it can be constructed a smooth curve that cannot be the boundary of any graph over $\W$ with curvature $H$.}

On the other hand, Guio-Sa Earp \cite{eliasarticle} considered a bounded domain $\W$ contained in a vertical hyperplane and studied the Dirichlet problem for the prescribed mean curvature equation for hypersurfaces which intersect at must in one point the horizontal horocycles orthogonal to $\W$. 
They obtained a sharp result when $\Hc_C(y) \geq \modulo{H(y)}$ for each $y\in\partial\W$ provided again that $\modulo{H(x)}\leq 1$ in $\W$.  

We highlight that the reason why Guio-Sa Earp and Nitsche have assumed that $H$ takes values on [-1,1] is to use the horoespheres as barriers in order to obtain a priori height estimates. Theorem \eqref{T_exist_Ricci} allows to consider functions taking values in the complement of this set. To see this note that, in this context, condition \eqref{cond_H_Ricci_exist} becomes
$$
\norm{\nH\left(\Phi(x,\varphi(y(x)))\right)}_{\oM}\leq \left(H\left(\Phi(x,\varphi(y(x)))\right)\right)^2 -1.
$$
\section{Transformation formulas}\label{transf_form}

In this section we compute some formulas that will help us throughout the text. 
First we establish a proposition that gives us a relation between the mean curvature of an hypersurfaces in $M$ and its translations along the flow lines of $Y$. 

\begin{prop}\label{Sup_transladadas}
Let $\Gamma$ be an embedded and oriented hypersurface in $M$ and $N$ a unit vector field normal to $\Gamma$. Let $\overline{\Gamma}$ be the translation of $\Gamma$ along the flow of $Y$. 
Then, for every $x\in\Gamma$, $N(x)$ is normal to $\overline{\Gamma}$ at $x$ and 
\begin{equation}
n\Hc_{\overline{\Gamma}}(x)=(n-1)\Hc_{\Gamma}(x)+\rho^{-2}\escalar{\oconex_Y Y}{N}, 
\label{eq:curv_oGamma}
\end{equation}
where $\Hc_{\overline{\Gamma}}$ and $\Hc_{\Gamma}$ are calculated with respect to $N$. 
\end{prop}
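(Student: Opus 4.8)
The plan is to fix $x \in \Gamma$ and work with an adapted orthonormal frame. Since the orthogonal distribution of $Y$ is integrable and $\Gamma \subset M$ is tangent to this distribution at every point, the vector $N(x)$ — which is tangent to $M$ and normal to $\Gamma$ inside $M$ — is automatically orthogonal to $Y$, hence normal to $\overline{\Gamma}$ at $x$, because $T_x\overline{\Gamma}$ is spanned by $T_x\Gamma$ together with $Y(x)$. This justifies the first assertion. For the curvature formula, I would choose an orthonormal frame $\{e_1,\dots,e_{n-1}\}$ for $T_x\Gamma$ and set $e_n = \rho^{-1} Y(x)$, so that $\{e_1,\dots,e_n\}$ is an orthonormal basis of $T_x\overline{\Gamma}$ and $N(x)$ completes it to an orthonormal basis of $T_x\oM$.

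Next I would compute the second fundamental form $\overline{A}$ of $\overline{\Gamma}$ with respect to $N$ by evaluating $\escalar{\overline{A}(e_i,e_i)}{N} = -\escalar{\oconex_{e_i} N}{e_i}$ for each $i$. For $i \le n-1$, I want to relate $\escalar{\oconex_{e_i} N}{e_i}$ to the analogous quantity $\escalar{\conex_{e_i} N}{e_i}$ computed in $M$. Since $M$ is a leaf of the integrable orthogonal distribution, it is totally geodesic if and only if $Y$ is... but in general $M$ need not be totally geodesic, so I must be careful: the Gauss-type relation between $\oconex$ restricted to directions tangent to $M$ and $\conex$ on $M$ involves the second fundamental form of $M$ in $\oM$. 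However, $N$ is tangent to $M$, so for $e_i, e_j \in T_x\Gamma \subset T_xM$ the tangential (to $M$) part of $\oconex_{e_i} e_j$ is $\conex_{e_i} e_j$ and the normal part is a multiple of $Y$; pairing with $N$ kills the normal part, giving $\escalar{\oconex_{e_i} N}{e_i} = \escalar{\conex_{e_i} N}{e_i}$ for $i \le n-1$. Summing these $n-1$ terms produces the $(n-1)\Hc_{\Gamma}(x)$ contribution.

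The remaining term is the $i=n$ contribution, $\escalar{\overline{A}(e_n,e_n)}{N} = -\rho^{-2}\escalar{\oconex_Y N}{Y}$. Here I would use that $Y$ is Killing: $\escalar{\oconex_Y N}{Y} + \escalar{\oconex_Y Y}{N} \ne$ the naive thing, so instead I use the Killing equation $\escalar{\oconex_Z Y}{W} + \escalar{\oconex_W Y}{Z} = 0$, which with $Z = W = $ appropriate choices, or more directly the identity $\escalar{\oconex_Y N}{Y} = -\escalar{\oconex_Y Y}{N}$ obtained from differentiating $\escalar{Y}{N}=0$ along $Y$ combined with the Killing symmetry. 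Concretely, $0 = Y\escalar{Y}{N} = \escalar{\oconex_Y Y}{N} + \escalar{Y}{\oconex_Y N}$, hence $\escalar{\oconex_Y N}{Y} = -\escalar{\oconex_Y Y}{N}$, so the $i=n$ term equals $+\rho^{-2}\escalar{\oconex_Y Y}{N}$. Dividing the total by $n$ yields \eqref{eq:curv_oGamma}.

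I expect the main obstacle to be the bookkeeping for $i \le n-1$: one must verify cleanly that the second fundamental form of the leaf $M$ inside $\oM$ does not contaminate the computation, i.e.\ that pairing everything against $N$ (which lies in $TM$) isolates exactly the intrinsic connection of $M$. This is where the integrability of the orthogonal distribution and the fact that $N \perp Y$ are both essential, and it is worth stating explicitly rather than treating it as routine.
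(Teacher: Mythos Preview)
Your proposal is correct and follows essentially the same frame-based computation as the paper: split an orthonormal basis of $T_x\overline{\Gamma}$ into $T_x\Gamma$ and $\rho^{-1}Y$, match the first $n-1$ contributions to the shape operator of $\Gamma$ in $M$, and use $Y\escalar{Y}{N}=0$ to get the extra $\rho^{-2}\escalar{\oconex_Y Y}{N}$ term.

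The one point where you and the paper diverge is precisely your flagged ``main obstacle.'' The paper simply asserts that $M$ is totally geodesic in $\oM$ and uses this to identify $\overline{A}_N(e_i)=A_N(e_i)$ for $i\le n-1$. This assertion is in fact true: for $X,Z\in TM$ the second fundamental form satisfies $\rho\,II(X,Z)=\escalar{\oconex_X Z}{Y}=-\escalar{Z}{\oconex_X Y}$, and the Killing equation makes this antisymmetric in $X,Z$, forcing $II\equiv 0$. Your alternative---noting that whatever normal component $\oconex_{e_i}N$ might have is a multiple of $Y$ and hence killed upon pairing with $e_i\in TM$---is also valid and has the virtue of not requiring the totally-geodesic fact at all. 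Either route closes the argument; the paper's is slightly cleaner once one knows the leaf is totally geodesic, while yours is more robust.
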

\begin{proof}
Recall that the translation of $\Gamma$ is given by 
$$\overline{\Gamma}=\left\{\Phi(x,z),\ x\in\Gamma, \ z\in\R\right\}.$$
Therefore, if $x_1,\dots, x_{n-1}$ are coordinates in $\Gamma$, then $x_1,\dots, x_{n-1},z$ are coordinates in $\overline{\Gamma}$. Consequently, $Y=\frac{\partial \Phi}{\partial z}$ is tangent to $\overline{\Gamma}$. For $x\in\Gamma$ we obviously have that $N(x)$ is orthogonal to $\overline{\Gamma}$ at $x$ because it is orthogonal to $Y$ (since $Y$ is normal to $M$) and to the remaining coordinate vectors $\Ei=\frac{\partial \Phi}{\partial x_i}$.

Let us denote by $A_N$ and $\overline{A}_N$ the shape operator of ${\Gamma}$ and $\overline{\Gamma}$, respectively. Let $e_1,\dots,e_{n-1}$ be the principal directions of $\Gamma$. Then, for each $1\leq i \leq n-1$, we have
$$ 
-\overline{A}_N (e_i)
=\left(\oconex_{{e}_i} {N}\right)^{\top\left({\overline{\Gamma}}\right)}
=\oconex_{{e}_i} {N}
=\conex_{{e}_i} {N}+\left(\oconex_{{e}_i} {N}\right)^{\bot(M)}
=\left(\conex_{{e}_i} {N}\right)^{\top\left({{\Gamma}}\right)}
=-A_N(e_i),$$
where the the penultimate step is true because $M$ is totally geodesic in $\oM$ and $N$ is a unit vector.
This means that $e_1,\dots,e_{n-1}$ are also principal directions of $\overline{\Gamma}$ at $x$. 
Since $Y$ is a tangent vector to $\overline{\Gamma}$ orthogonal to $e_i,\dots,e_{n-1}$, then $\rho^{-1}Y$ is the $n$-th principal direction of $\overline{\Gamma}$, and the corresponding principal curvature is given by
\begin{equation}
\kappa(x)=\rho^{-2}\escalar{\oconex_YY}{N}.
\label{eq:kappa}
\end{equation}
Indeed, 
\begin{align*}
\kappa(x)=\escalar{\overline{A}_N \left(\rho^{-1}Y\right)}{\rho^{-1}Y}_{\oM}
 = \rho^{-2}\escalar{-\oconex_{Y} \overline{N}}{Y}_{\oM}
 = \rho^{-2}\left(-Y\escalar{\overline{N}}{Y}_{\oM}  + \escalar{\overline{N}}{\oconex_{Y}{Y}}_{\oM}\right)
.
\end{align*}
Expression \eqref{eq:curv_oGamma} directly follows from the previous explanation. 
\end{proof}

%
%
%
%
%

\subsection{Transformation formulas}

In an open set in $M$ we define $w=\psi\circ \dd  + \varphi$, where $\psi\in\cl^{2}([a,b])$ and $\dd $ is a distance function (that is, $\nabla \dd =1$). 
We want to compute $\Q w$, where $\Q$ is the operator defined in \eqref{operador_Q}. 
We have
$$\nabla w= \psi'(\dd )\nabla \dd  +\nabla \varphi,$$
so 
$$\Hess w=\nabla \nabla w = \psi''\escalar{\nabla \dd }{\cdot}\escalar{\nabla \dd }{\cdot}+\psi'\Hess \dd  + \Hess \varphi$$
and
\begin{equation}
\Delta w=\tr \left(\Hess w\right) = \psi''+\psi'\Delta \dd  + \Delta \varphi.
\label{eq:Transf_Deltaw}
\end{equation}
We now compute
\begin{align*}
 &\Hess w (\nabla w,\nabla w)\\
=&\psi''\escalar{\nabla \dd }{\nabla w}^2+\psi'\Hess \dd  (\nabla w,\nabla w) + \Hess \varphi (\nabla w,\nabla w)\\
=&\psi''\escalar{\nabla \dd }{\nabla w}^2+\psi'\left( \Hess \dd  (\psi'\nabla \dd +\nabla \varphi,\psi'\nabla \dd +\nabla \varphi) \right) + \Hess \varphi (\nabla w,\nabla w)\\
=&\psi''\escalar{\nabla \dd }{\nabla w}^2+\psi'\left( \psi'^2\Hess \dd  (\nabla \dd ,\nabla \dd ) +2\psi'\Hess \dd  (\nabla \varphi,\nabla \dd )+\Hess \dd  (\nabla \varphi,\nabla \varphi) \right) + \Hess \varphi (\nabla w,\nabla w).
\end{align*}
Since
$$\Hess \dd  (X,\nabla \dd )=\escalar{\conex_X\nabla \dd }{\nabla \dd }=\frac{1}{2}X\escalar{\nabla \dd }{\nabla \dd }=0$$
for every smooth field $X$ over $M$, we conclude
\begin{equation}
\Hess w (\nabla w,\nabla w)=\psi''\escalar{\nabla \dd }{\nabla w}^2+\psi'\Hess \dd  (\nabla \varphi,\nabla \varphi) + \Hess \varphi (\nabla w,\nabla w).
\label{eq:Transf_Hessw}
\end{equation}

On the other hand, 
$$ \escalar{\cYY}{\nabla w}=\escalar{\cYY}{\psi'\nabla \dd  + \nabla \varphi}=\psi'\escalar{\cYY}{\nabla \dd } + \escalar{\cYY}{\nabla \varphi}.$$
Therefore,
\begin{equation}
\rho^{-2}\left(\rho^{-2}+W^2\right)\escalar{\cYY}{\nabla w}
=\rho^{-4}\escalar{\cYY}{\nabla w} + \psi' \rho^{-2} W^2 \escalar{\cYY}{\nabla \dd } + \rho^{-2} W^2\escalar{\cYY}{\nabla \varphi}.
\label{eq:Transf_TermoExtra}
\end{equation}

Using \eqref{eq:Transf_Deltaw}, \eqref{eq:Transf_Hessw} and \eqref{eq:Transf_TermoExtra} we finally get (see \eqref{operador_Q})
\begin{equation}\label{eq_transformacao_0}
\begin{split}
  \Q  w =\psi' W^2\Delta \dd  - \psi'\Hess \dd  (  \nabla \varphi, \nabla \varphi) + \psi'' W^2-\psi''\escalar{\nabla   \dd  }{\nabla  w }^2
 +    W^2\Delta \varphi  -    \Hess\varphi\left(\nabla  w ,\nabla  w \right)\\
- \rho^{-4}\escalar{\cYY}{\nabla  w } - \psi' \rho^{-2} W^2  \escalar{\cYY}{\nabla \dd }     - \rho^{-2}W^2\escalar{\cYY}{\nabla \varphi} -n H\left(\Phi(x,w)\right)W^3,
\end{split}
\end{equation}
where 
$$
W=W(x,\nabla w)=\sqrt{\rho^{-2}+\norm{\nabla w}^2}=\sqrt{\rho^{-2}+\norm{\psi'\nabla  \dd   +   \nabla \varphi}^2}$$

Suppose now that $\dd $ is the distance to a compact and embedded hypersurface $\Gamma$ in $M$, that is, $\dd (x)=\dist(x,\Gamma)$. 
{It is well known that $\dd $ is a $\cl^2$ function over the set
$$ \Sigma=\left\{\exp^{\bot}\left(y,tN(y)\right); \ y\in \Gamma, 0\leq {t}<\tau(y)\right\},$$
where $N$ is the unit normal field to $\Gamma$.  
Besides, every $x\in\Sigma$ is contained in an hypersurface $\Gamma_{\dd}$ which is parallel to a portion of $\Gamma$ (or even the whole $\Gamma$ in case $x$ is close enough to $\Gamma$). Furthermore, the laplacian of the function $\dd $ and the mean curvature of $\Gamma_{\dd}$ with respect to the unit normal field $N=\nabla \dd$ are related by the formula 
$$\Delta \dd (x) = -(n-1)\Hc_{\Gamma_{\dd }}(x).$$ 
}%

Let $\overline{\Gamma}$ the translation of $\Gamma$ along the flow lines of $Y$. Proposition \ref{Sup_transladadas} then yields
\begin{equation}
n\Hc_{\overline{\Gamma}_{\dd}}(x)= (n-1)\Hc_{\Gamma_{\dd }}(x) + \rho^{-2}\escalar{\cYY}{N} = - \Delta \dd (x) + \rho^{-2}\escalar{\cYY}{\nabla \dd }.
\label{eq:curv_cylinder}
\end{equation}

Substituting \eqref{eq:curv_cylinder} in \eqref{eq_transformacao_0} we finally get
\begin{equation}\label{eq_transformacao}
\begin{split}
  \Q  w =-n\psi' W^2\Hc_{\overline{\Gamma}_{\dd}} - \psi'\Hess \dd  (  \nabla \varphi, \nabla \varphi) + \psi'' W^2-\psi''\escalar{\nabla   \dd  }{\nabla  w }^2
 +    W^2\Delta \varphi  -    \Hess\varphi\left(\nabla  w ,\nabla  w \right)\\
- \rho^{-4}\escalar{\cYY}{\nabla  w }  - \rho^{-2}W^2\escalar{\cYY}{\nabla \varphi} -n H\left(\Phi(x,w)\right)W^3,
\end{split}
\end{equation}
Furthermore, if $\varphi$ is a constant, then
\begin{equation}\label{eq_transformacao_cte}
  \Q  w =-n\Hc_{\overline{\Gamma}_{\dd}}\psi' \left(\rho^{-2}+\psi'^2\right) + \rho^{-2}\psi'' - \rho^{-2}\psi'\kappa (x)  -n H\left(\Phi(x,w)\right)\left(\rho^{-2}+\psi'^2\right)^{3/2},
\end{equation}
where $\kappa(x)$ is the geodesic curvature of $\left\{\Phi_x(t),\ t\in\R\right\}$ at $x$ computed with respect to the normal $\nabla \dd (x)$.



\section{The a priori estimates}

{
Following a classical argument, the proof of Theorem \ref{T_exist_Ricci} depends on a $\cl^1$ a priori estimate for the solutions of related Dirichlet problems. In this section we establish the required estimate. 
}


\subsection{Global gradient estimate}

We adapt the proof of our Theorem 13 in \cite{alvarez2019existence}. 

\begin{teo}[Global gradient estimate]\label{teo_Est_global_gradiente}
Let $\W\subset M$ be a bounded domain. If a function $u\in\cl^3(\W)\cap\cl^1(\overline{\W})$ is a solution of \eqref{operador_minimo_1} for 
$H\in\cl^{1}\left(\ocyl\left({\sup\limits_{\overline{\W}}\modulo{u}}\right)\right)$ 
satisfying 
\begin{equation}
\escalar{\nH}{Y}_{\oM}\geq 0 \mbox{ in } \ocyl\left({\sup\limits_{\overline{\W}}\modulo{u}}\right),
\label{eq:HY_GlobalEstimate}
\end{equation}
then
$$ \sup_{\W}\norm{\nabla u}\leq C $$
where $C$ is a constant depending on {the data}
$n$, 
$\sup\limits_{\W}\modulo{\oRicc}$,  
$\sup\limits_{\W}\rho, \inf\limits_{\W}\rho$, 
$\sup\limits_{\W}\norm{\cYY}_{\oM}$, 
$\sup\limits_{\W\times\left[-\sup\limits_{\W}\modulo{u},\sup\limits_{\W}\modulo{u}\right]}\norm{D\Phi}$, 
$\sup\limits_{\ocyl\left({\sup\limits_{\overline{\W}}\modulo{u}}\right)}\norm{\overline{\nabla}H}_{\oM}$
and on $\sup\limits_{\W}\modulo{u}$ and 
$\sup\limits_{\partial\W}\norm{\nabla u}$.
\end{teo}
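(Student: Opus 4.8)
The plan is to obtain the estimate by a maximum principle argument applied to an auxiliary function built from $\norm{\nabla u}^2$ (equivalently, from $W$) and a well-chosen function of $u$ itself. Concretely, I would consider $f = \eta(u)\,\varphi(\norm{\nabla u}^2)$ or the logarithmic variant $f = \log\norm{\nabla u} + h(u)$ on $\overline{\W}$, where $h$ is a bounded function to be determined, and argue at an interior maximum point $x_0$ of $f$. If the maximum of $f$ is attained on $\partial\W$, then $\sup_{\W}\norm{\nabla u}$ is already controlled by $\sup_{\partial\W}\norm{\nabla u}$ and $\sup_{\overline{\W}}\modulo{u}$; so assume $x_0$ is interior. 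At $x_0$ one has $\nabla f(x_0)=0$ and $\Hess f(x_0)\leq 0$, and one feeds these relations into the linearized version of equation \eqref{operador_minimo_1}.

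The key computational step is to differentiate the equation $\Q u = 0$ once more. Writing the operator in the form $a^{ij}(x,\nabla u)\,\nabla^2_{ij}u = b(x,u,\nabla u)$ with $a^{ij} = W^2 g^{ij} - \nabla^i u\,\nabla^j u$ (so that $a^{ij}$ is elliptic with eigenvalues between $\rho^{-2}$ and $W^2$), I would commute derivatives using the Ricci identity on $M$ — this is where $\Ricc$ of $M$, hence $\oRicc$ together with the Gauss-type relation between curvatures of $M$ and $\oM$, and $\sup_{\W}\norm{\cYY}_{\oM}$ enter — and then apply the operator $a^{ij}\nabla^2_{ij}$ to $\norm{\nabla u}^2$. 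The terms coming from differentiating $b$ produce, among others, $\escalar{\nH}{Y}_{\oM}$ (after using that $H$ is evaluated along $\Phi(x,u)$ and the chain rule introduces $Y = \partial\Phi/\partial z$ and $\nabla u$), and hypothesis \eqref{eq:HY_GlobalEstimate} is exactly what is needed to ensure this term has the favorable sign; the remaining terms are controlled by $\sup\norm{\overline{\nabla}H}_{\oM}$, $\sup\modulo{u}$, $\sup\norm{D\Phi}$, and the bounds on $\rho$. Combining with $\Hess f(x_0)\le 0$ and choosing $h'$ (or $\eta$) large enough — depending only on the listed data — to absorb the bad zeroth- and first-order terms, one forces $\norm{\nabla u}(x_0)$ to be bounded, hence $f(x_0)$, hence $\sup_{\W}\norm{\nabla u}$.

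The main obstacle I expect is bookkeeping the curvature and connection terms correctly: the operator $\Q$ lives on $\W\subset M$ but its coefficients (through $\rho$, $\cYY$, and $H\circ\Phi$) encode ambient geometry, so differentiating twice requires careful use of Proposition \ref{Sup_transladadas}-type identities and of the formula relating $\Ricc$ on $M$ to $\oRicc$, and one must check that every error term is genuinely dominated by the quantities in the stated dependence list (in particular that no derivative of $\Ricc$ or second derivative of $\rho$ appears — these should cancel or be reorganized into first-order data via the Killing equation). A secondary technical point is the regularity threshold: $u\in\cl^3(\W)$ is used precisely so that the differentiated equation makes classical sense at $x_0$; since the conclusion is an a priori bound, this is not restrictive. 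Once the interior case is closed, the theorem follows by taking the maximum of the two alternatives.

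Since this adapts Theorem 13 of \cite{alvarez2019existence} from the product $M\times\R$ to the Killing setting, I would organize the write-up to highlight only the modifications: the replacement of the vertical translation field by $Y$, the appearance of $\rho$ and $\cYY$, and the fact that $\oRicc$ now plays the role the Ricci curvature of $M$ played there, the bridge between the two being \eqref{eq:curv_oGamma}.
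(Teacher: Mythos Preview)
Your plan is correct and matches the paper's approach essentially line for line: the paper takes the auxiliary function $w=\norm{\nabla u}\,e^{Au}$ (your logarithmic variant $\log\norm{\nabla u}+h(u)$ with $h(u)=Au$), works at an interior maximum in normal coordinates aligned with $\nabla u(x_0)$, differentiates \eqref{operador_minimo_1} in that direction, uses the Ricci identity to produce $\oRicc$ (via the identity $\sum_{i>1}R(\partial_i,\partial_1,\partial_i,\partial_1)=-\oRicc(\partial_1)$, since $M$ is totally geodesic and $\overline{R}(Y,\partial_1,Y,\partial_1)=0$), drops the $\escalar{\nH}{Y}_{\oM}$ term by \eqref{eq:HY_GlobalEstimate}, and finally chooses $A$ large depending on the listed data to force $\rho\norm{\nabla u}(x_0)<\sqrt{3}$. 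Your anticipated bookkeeping obstacle is exactly where the paper spends most of its effort; no additional ideas are needed.
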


\begin{proof}

{As in the proof of Theorem 13 in \cite{alvarez2019existence}}, we set 
\begin{equation}
w(x)=\norm{\nabla u(x)}e^{Au(x)},
\label{function_w}
\end{equation}
where $A$ is a constant greater than 1 to be fixed latter on. We denote by $x_0$ a point where $w$ attains its maximum in $\overline{\W}$. Suppose that $\nabla u(x_0)\neq 0$. If $x_0\in\partial\W$, then
\begin{equation}\label{est_global_1}
\sup_{\W}\norm{\nabla u(x)}\leq\sup_{\partial\W}\norm{\nabla u}e^{2A\sup\limits_{\W}\modulo{u}}. 
\end{equation}

Let us assume, instead, that $x_0\in\W$. Set normal coordinates $(x_1,\dots,x_n)$ at $x_0$ in $M$ such that $\frac{\partial}{\partial x_1}\big|_{x_0}=\frac{\nabla u(x_0)}{\norm{\nabla u(x_0)}}$. Then, 
\begin{equation}
\nabla u(x_0)={\norm{\nabla u(x_0)}}\Eum\big|_{x_0}
\label{eq:grad}
\end{equation}
and
\begin{equation}\label{der_u_x0}
\Dk u(x_0)=\escalar{\Ek\big|_{x_0}}{\nabla u(x_0)}
=\norm{\nabla u(x_0)}\delta_{k1}.
\end{equation}


{We denote by $H_0=H(\Phi(x_0,u(x_0)))$, $\rho_0=\rho(x_0)$ e $W_0=\sqrt{\rho_0^{-2}+\norm{\nabla u(x_0)}^2}$. Differentiating \eqref{operador_minimo_1} with respect to $x_1$ we have}
\begin{equation}\label{der_MCE}
\begin{split}
n\left(\Dum H(\Phi(x,u(x)))\right)W^3 + nH(\Phi(x,u(x)))\left(\Dum \, W^3\right)=
\left(\Dum \, W^2 \right)\Delta u+W^2\left(\Dum\Delta u\right)\\
-\Dum \Hess u (\nabla u,\nabla u) 
-\Dum \left(\rho^{-2}\left(\rho^{-2}+W^2\right)\right)\escalar{\cYY}{\nabla u}
-\rho^{-2}\left(\rho^{-2}+W^2\right)\Dum\escalar{\cYY}{\nabla u}.
\end{split}
\end{equation}

Due to the fact that $u$ is a function taking values on the base $M$, the derivatives involving this function are obtained as in the proof of Theorem 13 in our paper \cite{alvarez2019existence}. 
So, for each $k\in\{1,2,\dots n\}$, one has
\begin{equation}\label{derk_2}
\Dumk u(x_0)=-A\norm{\nabla u (x_0)}^2\delta_{k1}.
\end{equation}
and 
\begin{equation}\label{Dknorm_2}
\Dk \left(\norm{\nabla u}^2\right)(x_0)=-2A\norm{\nabla u (x_0)}^3\delta_{k1}.
\end{equation}
On the other hand, since $\rho=\norm{Y}_{\oM}$ and $Y$ is a Killing field it follows
$$ \Di \rho^2 =2\escalar{\oconex_{\Ei}Y}{Y}_{\oM}=-2\escalar{\cYY}{\Ei}_{\oM}=-2\escalar{\cYY}{\Ei},$$
thus
\begin{equation}
\Di \rho^{-2} =2\rho^{-4}\escalar{\cYY}{\Ei}.
\label{eq:Di_rho}
\end{equation}
Expressions \eqref{Dknorm_2} and \eqref{eq:Di_rho} immediately give
\begin{equation}\label{dW2}
\Dum \, W^2 (x_0)
=2\rho_0^{-4}\escalar{\cYY}{\Eum}-2A\norm{\nabla u (x_0)}^3,
\end{equation}
and
\begin{equation}\label{dW3}
\Dum \, W^3 (x_0)=3\rho_0^{-4}W_0\escalar{\cYY}{\Eum}-3AW_0\norm{\nabla u (x_0)}^3.
\end{equation}

From \eqref{eq:Di_rho} and \eqref{dW2} it also follows
\begin{align*}
\Dum \left( \rho^{-2}\left(\rho^{-2}+W^2\right)\right)=&\left(\Dum \rho^{-2}\right)\left(\rho^{-2}+W^2\right) + \rho^{-2} \left(\Dum \rho^{-2}+ \Dum W^2\right)\\
=& 2\rho^{-4}\escalar{\cYY}{\Eum}\left(\rho^{-2}+W^2\right) \\
&+\rho^{-2} \left( 2\rho^{-4}\escalar{\cYY}{\Eum} + 2\rho^{-4}\escalar{\cYY}{\Eum}-2A\norm{\nabla u (x_0)}^3 \right).
\end{align*}
After some computations we get 
\begin{equation}
\Dum \left( \rho^{-2}\left(\rho^{-2}+W^2\right)\right)(x_0)= 2\rho_0^{-4}\left(3\rho_0^{-2}+W_0^2\right)\escalar{\cYY}{\Eum}
-2A\rho_0^{-2}\norm{\nabla u (x_0)}^3.
\label{eq:DumTextra_1}
\end{equation}

Furthermore,
\begin{align*} 
\Dum \escalar{\cYY}{\nabla u}=& \escalar{\conex_{\Eum}\cYY}{\nabla u} + \escalar{\cYY}{\conex_{\Eum}\nabla u}
\end{align*}
Since $\cYY$ is parallel the first term vanishes.
Recalling 
the properties of the normal coordinates at $x_0$ 
and using \eqref{derk_2} we obtain
\begin{equation}
\conex_{\Eum}\nabla u = \ds\sum_{i}  \Dumi u(x_0)\Ei = \ds\sum_i \left( -A\norm{\nabla u (x_0)}^2\delta_{i1} \right)\Ei = -A\norm{\nabla u (x_0)}^2\Eum.
\label{eq:conex_gradu}
\end{equation}
Hence,
\begin{equation}
\Dum \escalar{\cYY}{\nabla u}=-A\norm{\nabla u (x_0)}^2 \escalar{\cYY}{\Eum}.
\label{eq:DumTextra_2}
\end{equation}

From \eqref{eq:grad}, \eqref{eq:DumTextra_1} and \eqref{eq:DumTextra_2} we conclude
\begin{align*}
 &\Dum \left(\rho^{-2}\left(\rho^{-2}+W^2\right)\right)\escalar{\cYY}{\nabla u}
+\rho^{-2}\left(\rho^{-2}+W^2\right)\Dum\escalar{\cYY}{\nabla u}\\
 =&\left( 2\rho_0^{-4}\left(3\rho_0^{-2}+W_0^2\right)\escalar{\cYY}{\Eum}
-2A\rho_0^{-2}\norm{\nabla u (x_0)}^3   \right)\norm{\nabla u(x_0)}\escalar{\cYY}{\Eum}\\
&+\rho^{-2}\left(\rho^{-2}+W^2\right)\left(-A\norm{\nabla u (x_0)}^2 \escalar{\cYY}{\Eum}\right)\\
=& 2\rho_0^{-4}\left(3\rho_0^{-2}+W_0^2\right)\norm{\nabla u(x_0)}\escalar{\cYY}{\Eum}^2
-2A\rho_0^{-2}\norm{\nabla u (x_0)}^4   \escalar{\cYY}{\Eum}\\
&-A\rho^{-2}\left(\rho^{-2}+W^2\right)\norm{\nabla u (x_0)}^2 \escalar{\cYY}{\Eum},
\end{align*}
thus
{\small
\begin{equation}
\begin{split}
&\Dum \left(\rho^{-2}\left(\rho^{-2}+W^2\right)\right)\escalar{\cYY}{\nabla u}
+\rho^{-2}\left(\rho^{-2}+W^2\right)\Dum\escalar{\cYY}{\nabla u}\\
=&2\rho_0^{-4}\left(3\rho_0^{-2}+W_0^2\right)\norm{\nabla u(x_0)}\escalar{\cYY}{\Eum}^2
-A\rho^{-2}\norm{\nabla u (x_0)}^2\left(3\norm{\nabla u (x_0)}^2+2\rho^{-2}\right) \escalar{\cYY}{\Eum},
\end{split}
\label{eq:deriv_TermoExtra}
\end{equation}
}%

In these normal coordinates we also have
\begin{equation}\label{DumHessiano_antigo}
\Dum \Hess u \left( \Ei, \Ej \right) (x_0) =\Dumij u(x_0) - \escalar{\conex_{\Eum}\conex_{\Ei}\Ej}{\nabla u(x_0)}
\end{equation}
and 
\begin{equation}\label{DumLaplaciano}
\Dum\Delta u(x_0)=\ds\sum_{i=1}^n\left(\Dumii u(x_0) - \escalar{\conex_{\Eum}\conex_{\Ei}\Ei}{\nabla u(x_0)}\right).
\end{equation}
Besides, 
\begin{equation}\label{est_Dumumum}
\Dumumum u(x_0)\leq 2A^2\norm{\nabla u (x_0)}^3+\norm{\nabla u(x_0)}\escalar{\conex_{\Eum}\conex_{\Eum} \Eum}{\Eum}.
\end{equation}
\noindent and 
\begin{equation}\label{est_Dkkum}
\Dkkum u(x_0)\leq -A\norm{\nabla u (x_0)}\Dkk u(x_0)+\norm{\nabla u(x_0)}\escalar{\conex_{\Ek}\conex_{\Ek} \Eum}{\Eum} \ \mbox{ if }\  k>1.
\end{equation}
The explicit calculations to obtain \eqref{DumHessiano_antigo}, \eqref{DumLaplaciano}, \eqref{est_Dumumum} and \eqref{est_Dkkum} can be found in the previous work \cite{alvarez2019existence}.

From \eqref{DumHessiano_antigo} we derive
{\small
\begin{equation}
\Dum \Hess u(\nabla u, \nabla u)({x_0})=-2A \norm{\nabla u(x_0)}^3\Dumum u
+ \norm{\nabla u(x_0)}^2\left(\Dumumum u (x_0) -\escalar{\conex_{\Eum}\conex_{\Eum}\Eum}{\nabla u(x_0)}\right). 
\label{DumHessiano}
\end{equation}
}
Hence, recalling again \eqref{eq:grad}, we obtain
\begin{align*}
 &W_0^2\Dum\Delta u(x_0)-\Dum \Hess u(\nabla u, \nabla u)({x_0})\\
=&W_0^2 \ds\sum_{i=1}^n\left(\Dumii u(x_0) - \escalar{\conex_{\Eum}\conex_{\Ei}\Ei}{\nabla u(x_0)}\right)\\
&+2A \norm{\nabla u(x_0)}^3\Dumum u
-\norm{\nabla u(x_0)}^2\left(\Dumumum u (x_0) -\escalar{\conex_{\Eum}\conex_{\Eum}\Eum}{\nabla u(x_0)}\right)\\
=&W_0^2 \ds\sum_{i>1}^n\left(\Dumii u(x_0) - \escalar{\conex_{\Eum}\conex_{\Ei}\Ei}{\nabla u(x_0)}\right)\\
&+2A \norm{\nabla u(x_0)}^3\Dumum u
+\rho^{-2}\left(\Dumumum u (x_0) -\escalar{\conex_{\Eum}\conex_{\Eum}\Eum}{\nabla u(x_0)}\right)\\
\leq&W_0^2 \ds\sum_{i>1}^n\left( -A\norm{\nabla u (x_0)}\Dii u(x_0)+\norm{\nabla u(x_0)}\left(\escalar{\conex_{\Ei}\conex_{\Ei} \Eum}{\Eum} - \escalar{\conex_{\Eum}\conex_{\Ei}\Ei}{\Eum}\right)\right)\\
&+2A \norm{\nabla u(x_0)}^3\Dumum u
+\rho^{-2}\left(2A^2\norm{\nabla u (x_0)}^3+\escalar{\conex_{\Eum}\conex_{\Eum} \Eum}{\nabla u(x_0)}-\escalar{\conex_{\Eum}\conex_{\Eum}\Eum}{\nabla u(x_0)}\right)\\
%
%
=&-A\norm{\nabla u (x_0)}W_0^2 \ds\sum_{i>1}^n\Dii u(x_0)+\norm{\nabla u(x_0)}W_0^2 \sum_{i>1}^nR\left(\Ei,\Eum,\Ei,\Eum\right)\\
&+2A \norm{\nabla u(x_0)}^3\Dumum u +2A^2\rho^{-2}\norm{\nabla u (x_0)}^3,
\end{align*}
where $R$ is the curvature of $M$. 

{We notice now that $(x_1,\dots,x_n,z)$ are coordinates in a neighbourhood of $x_0$ in $\oM$}. Denoting by $\overline{R}$ the curvature of $\overline{M}$ we have
$$ R\left(\Ei,\Eum,\Ei,\Eum\right)= \overline{R}\left(\Ei,\Eum,\Ei,\Eum\right)$$
because the second fundamental form of $M$ vanishes. In addition, since $Y$ is normal to $M$ and $\oconex_{Y}Y$ is parallel, we infer 
\begin{align*}
\overline{R}\left(Y,\Eum,Y,\Eum\right)=&\escalar{\oconex_{Y}\oconex_{\Eum} Y}{\Eum} - \escalar{\oconex_{\Eum}\oconex_{Y}Y}{\Eum}=0. 
\end{align*}
Then, 
\begin{equation}
\sum_{i>1}^nR\left(\Ei,\Eum,\Ei,\Eum\right)=-\oRicc(\Eum).
\label{eq:Ricci}
\end{equation}
Consequently,
\begin{equation}\label{SegundoTermo}
\begin{split}
W_0^2\Dum\Delta u(x_0)-\Dum \Hess u(\nabla u, \nabla u)({x_0})
=-A\norm{\nabla u (x_0)}W_0^2 \ds\sum_{i>1}^n\Dii u(x_0)-\norm{\nabla u(x_0)}W_0^2 \oRicc(\Eum)\\
+2A \norm{\nabla u(x_0)}^3\Dumum u +2A^2\rho^{-2}\norm{\nabla u (x_0)}^3.
\end{split}
\end{equation}

\bigskip

Finally,
{\small
\begin{equation}
\Dum H(\Phi(x,u(x)))=\escalar{\nH(\Phi(x_0,u(x_0)))}{\Dum \Phi(x_0,u(x_0))}_{\oM}
+\escalar{\nH(\Phi(x_0,u(x_0)))}{Y_{\Phi(x_0,u_0)}}_{\oM}\norm{\nabla u(x_0)},
\label{eq:Deriv_HoPHI}
\end{equation}
}%

Noticing that in normal coordinates 
\begin{equation}\label{laplac_x0}
\Delta u(x_0)=\ds\sum_{i=1}^n \Dii u(x_0)
\end{equation}
and substituting \eqref{dW2}, \eqref{dW3}, \eqref{eq:deriv_TermoExtra}, \eqref{SegundoTermo} and \eqref{eq:Deriv_HoPHI}, in \eqref{der_MCE} it follows
\begin{align*}
&nW_0^3\escalar{\nH(\Phi(x_0,u(x_0)))}{\Dum \Phi(x_0,u(x_0))}_{\oM}
+nW_0^3\norm{\nabla u(x_0)}\escalar{\nH(\Phi(x_0,u(x_0)))}{Y_{\Phi(x_0,u_0)}}_{\oM}\\
&+3n\rho_0^{-4}H_0W_0\escalar{\cYY}{\Eum}-3nA H_0 W_0\norm{\nabla u(x_0)}^3\\[1em]
\leq&\left(2\rho_0^{-4}\escalar{\cYY}{\Eum}-2A\norm{\nabla u(x_0)}^3\right)\Delta u(x_0)
-A\norm{\nabla u (x_0)}W_0^2 \ds\sum_{i>1}^n\Dii u(x_0)\\&-\norm{\nabla u(x_0)}W_0^2 \oRicc(\Eum)+2A \norm{\nabla u(x_0)}^3\Dumum u +2A^2\rho_0^{-2}\norm{\nabla u (x_0)}^3\\
&-2\rho_0^{-4}\left(3\rho_0^{-2}+W_0^2\right)\norm{\nabla u(x_0)}\escalar{\cYY}{\Eum}^2
+A\rho_0^{-2}\norm{\nabla u (x_0)}^2\left(3\norm{\nabla u (x_0)}^2+2\rho_0^{-2}\right) \escalar{\cYY}{\Eum}\\
=&2\rho_0^{-4}\escalar{\cYY}{\Eum}\Dumum u(x_0)
+\left(2\rho_0^{-4}\escalar{\cYY}{\Eum}-2A\norm{\nabla u(x_0)}^3-A\norm{\nabla u (x_0)}W_0^2 \right)\ds\sum_{i>1}^n\Dii u(x_0)
\\&-\norm{\nabla u(x_0)}W_0^2 \oRicc(\Eum)+2A^2\rho_0^{-2}\norm{\nabla u (x_0)}^3\\
&-2\rho_0^{-4}\left(3\rho_0^{-2}+W_0^2\right)\norm{\nabla u(x_0)}\escalar{\cYY}{\Eum}^2
+A\rho_0^{-2}\norm{\nabla u (x_0)}^2\left(3\norm{\nabla u (x_0)}^2+2\rho_0^{-2}\right) \escalar{\cYY}{\Eum}.
\end{align*}
Since \eqref{derk_2} holds we have
\begin{equation}
\begin{split}
&nW_0^3\escalar{\nH(\Phi(x_0,u(x_0)))}{\Dum \Phi(x_0,u(x_0))}_{\oM}
+nW_0^3\norm{\nabla u(x_0)}\escalar{\nH(\Phi(x_0,u(x_0)))}{Y_{\Phi(x_0,u_0)}}_{\oM}\\
\leq&3nA H_0 W_0\norm{\nabla u(x_0)}^3+\left(-3n\rho_0^{-4}H_0W_0+3A\rho_0^{-2}\norm{\nabla u(x_0)}^4\right)\escalar{\cYY}{\Eum}\\
&+\left(2\rho_0^{-4}\escalar{\cYY}{\Eum}-A\norm{\nabla u(x_0)}\left(\rho^{-2}+3\norm{\nabla u (x_0)}^2 \right)\right)\ds\sum_{i>1}^n\Dii u(x_0)\\
&-\norm{\nabla u(x_0)}W_0^2 \oRicc(\Eum)+2A^2\rho_0^{-2}\norm{\nabla u (x_0)}^3
-2\rho_0^{-4}\left(3\rho_0^{-2}+W_0^2\right)\norm{\nabla u(x_0)}\escalar{\cYY}{\Eum}^2.
\end{split}
\label{eq:est_part1}
\end{equation}


{In order to find the value of $\sum\limits_{i>1}\Dii u$ we evaluate the mean curvature equation \eqref{operador_minimo_1} at $x_0$. Recall also that, in normal coordinates, 
\begin{equation}\label{hess_x0}
\Hess u\left(\Ei,\Ej\right) (x_0)=\Dij u(x_0). 
\end{equation}
From \eqref{eq:grad} and \eqref{hess_x0} one has
\begin{equation}
\Hess u (\nabla u, \nabla u)(x_0)=\norm{\nabla u(x_0)}^2 \Hess u(x_0)\left(\Eum,\Eum\right)=\norm{\nabla u(x_0)}^2 \Dumum u(x_0)
\label{eq:Hess_0}
\end{equation}
\noindent Substituting \eqref{laplac_x0} and \eqref{eq:Hess_0} in the mean curvature equation \eqref{operador_minimo_1} it follows
\begin{align*}
W_0^2\ds\sum_{i=1}^n \Dii u(x_0) -\norm{\nabla u(x_0)}^2\Dumum u(x_0)-\rho_0^{-2}\left(\rho_0^{-2}+W_0^2\right)\escalar{\cYY}{\nabla u (x_0)} = nH_0W_0^3.
\end{align*}
Using \eqref{eq:grad} and \eqref{derk_2}, and making some algebraic computations we conclude
\begin{equation}\label{equ_curv_media_MxR_Delta_x0}
\ds\sum_{i>1}\Dii u(x_0) = nH_0W_0+\dfrac{A\rho_0^{-2}\norm{\nabla u(x_0)}^2}{W_0^2} + \dfrac{\rho_0^{-2}\left(\rho_0^{-2}+W_0^2\right)\norm{\nabla u (x_0)}}{W_0^2}\escalar{\cYY}{\Eum}.
\end{equation}
}

We can compute now the term in \eqref{eq:est_part1} involving this quantity:
\begin{align*}
&\left(2\rho_0^{-4}\escalar{\cYY}{\Eum}-A\norm{\nabla u(x_0)}\left(\rho^{-2}+3\norm{\nabla u (x_0)}^2 \right)\right)\ds\sum_{i>1}^n\Dii u(x_0)\\
=
&2\rho_0^{-4}\left(nH_0W_0+\dfrac{A\rho_0^{-2}\norm{\nabla u(x_0)}^2}{W_0^2}\right)\escalar{\cYY}{\Eum} 
+\dfrac{2\rho_0^{-6}(\rho_0^{-2}+W_0^2)\norm{\nabla u(x_0)}}{W_0^2}\escalar{\cYY}{\Eum}^2\\
&-AnH_0\norm{\nabla u(x_0)}W_0\left(\rho^{-2}+3\nabla u(x_0)^2\right)-\dfrac{A^2\rho^{-2}\norm{\nabla u(x_0)}^3\left(\rho^{-2}+3\nabla u(x_0)^2\right)}{W_0^2}\\
&-\dfrac{A\rho^{-2}\left(\rho^{-2}+3\nabla u(x_0)^2\right)\left(\rho_0^{-2}+W_0^2\right)\norm{\nabla u (x_0)}^2}{W_0^2}
\escalar{\cYY}{\Eum}\\
=
&\dfrac{\rho_0^{-2}}{W_0^2}\left(2\rho_0^{-2}nH_0W_0^3+2A\rho_0^{-4}\norm{\nabla u(x_0)}^2 
-A\norm{\nabla u(x_0)}^2\left(2\nabla u(x_0)^2+W_0^2\right)\left(\rho_0^{-2}+W_0^2\right)\right)\escalar{\cYY}{\Eum}\\
&+\dfrac{2\rho_0^{-6}(\rho_0^{-2}+W_0^2)\norm{\nabla u(x_0)}}{W_0^2}\escalar{\cYY}{\Eum}^2\\
&-AnH_0\norm{\nabla u(x_0)}W_0\left(\rho^{-2}+3\nabla u(x_0)^2\right)-\dfrac{A^2\rho^{-2}\norm{\nabla u(x_0)}^3\left(\rho^{-2}+3\nabla u(x_0)^2\right)}{W_0^2}\\
\end{align*}


We substitute this last expression in \eqref{eq:est_part1} to infer
\begin{equation}\label{Part_2}
\begin{split}
&nW_0^3\escalar{\nH(\Phi(x_0,u(x_0)))}{\Dum \Phi(x_0,u(x_0))}_{\oM}
+nW_0^3\norm{\nabla u(x_0)}\escalar{\nH(\Phi(x_0,u(x_0)))}{Y_{\Phi(x_0,u_0)}}_{\oM}\\
\leq
&f\dfrac{\rho_0^{-2}}{W_0^2}\escalar{\cYY}{\Eum}+g\dfrac{2\rho_0^{-4}}{W_0^2}\escalar{\cYY}{\Eum}^2 
-An\rho^{-2}H_0\norm{\nabla u(x_0)}W_0\\&
-\dfrac{A^2\rho^{-2}\norm{\nabla u(x_0)}^3\left(\norm{\nabla u(x_0)}^2-\rho^{-2}\right)}{W_0^2}
-\norm{\nabla u(x_0)}W_0^2 \oRicc(\Eum)
\end{split}
\end{equation}
where
\begin{equation*}
\begin{split}
f=&-3n\rho_0^{-2}H_0W_0^3+3A\norm{\nabla u(x_0)}^4W_0^2+2\rho_0^{-2}nH_0W_0^3+2A\rho_0^{-4}\norm{\nabla u(x_0)}^2 \\
&-A\norm{\nabla u(x_0)}^2\left(2\nabla u(x_0)^2+W_0^2\right)\left(\rho_0^{-2}+W_0^2\right)\\
=&-\rho_0^{-2}\left(4A\norm{\nabla u(x_0)}^4+nH_0W_0^3\right)
\end{split}
\end{equation*}
and
\begin{equation*}
g=\rho_0^{-2}(\rho_0^{-2}+W_0^2)-(3\rho_0^{-2}+W_0^2)W_0^2\leq 0. 
\end{equation*}
Substituting this facts in equation \eqref{Part_2}, dividing by $\rho^{-2}A^2W_0^3$ and rearranging terms we get
\begin{align*}
&\dfrac{\norm{\nabla u(x_0)}^3\left(\norm{\nabla u(x_0)}^2-\rho^{-2}\right)}{W_0^5}\\
\leq
&-\dfrac{n}{A^2\rho^{-2}}\norm{\nabla u(x_0)}\escalar{\nH(\Phi(x_0,u(x_0)))}{Y_{\Phi(x_0,u_0)}}_{\oM}
-\dfrac{n}{A^2\rho^{-2}}\escalar{\nH(\Phi(x_0,u(x_0)))}{\Dum \Phi(x_0,u(x_0))}_{\oM}
\\
&-\rho_0^{-2}\left(4\dfrac{\norm{\nabla u(x_0)}^4}{AW_0^5}+\dfrac{nH_0}{A^2W_0^5}\right)\escalar{\cYY}{\Eum}
-\dfrac{nH_0\norm{\nabla u(x_0)}}{A^2W_0^2}-\dfrac{\norm{\nabla u(x_0)}}{A^2\rho_0^{-2}W_0} \oRicc(\Eum)
\end{align*}

Due to the definition of $W$, we can choose $A$ large enough to guarantee that the sum of the second, third, fourth and fifth terms in the right hand of the inequality above is strictly less than $\frac{1}{8}$. 
Besides, hypothesis \eqref{eq:HY_GlobalEstimate} implies that the first term is non positive. 
Since $W^{3}<4\norm{\nabla u(x_0)}^3$ and we can assume that $\norm{\nabla u(x_0)}>\rho^{-1}$ we have
$$ \dfrac{\norm{\nabla u(x_0)}^2-\rho^{-2}}{\norm{\nabla u(x_0)}^2+\rho^{-2}}<\dfrac{1}{2}$$
which implies
$$ \rho\norm{\nabla u(x_0)}<\sqrt{3}.$$
Recalling the definition of $w$ in \eqref{function_w} we conclude
\begin{equation}\label{est_global_grad_interior}
\sup_{\W}\norm{\nabla u}\leq\sqrt{3}\ds\sup_{\W}\rho^{-1}e^{2A\sup\limits_{\W}\modulo{u}}, 
\end{equation}
where $A$ depends on 
$n$, 
$\sup\limits_{\W}\modulo{\oRicc}$,  
$\sup\limits_{\W}\rho, \inf\limits_{\W}\rho$, 
$\sup\limits_{\W}\norm{\cYY}_{\oM}$, 
$\sup\limits_{\W\times\left[-\sup\limits_{\W}\modulo{u},\sup\limits_{\W}\modulo{u}\right]}\norm{D\Phi}$, 
$\sup\limits_{\cyl_{\sup\limits_{\W}\modulo{u}}}\norm{\overline{\nabla}H}_{\oM}$. 

\bigskip
From \eqref{est_global_1} and \eqref{est_global_grad_interior} we obtain the desired estimate.
\end{proof}


\begin{obs}\label{GlobalEstimate_family}
This estimate also holds for every solution of the family of equations $\Q_{\sigma} u =0$ in $\W$. 
\end{obs}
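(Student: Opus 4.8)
The plan is to reduce the claim directly to Theorem \ref{teo_Est_global_gradiente}, since the two operators \eqref{operador_Q} and \eqref{operador_Qsigma} differ only by the replacement of $H$ with $\sigma H$. First I would observe that a function $u\in\cl^3(\W)\cap\cl^1(\overline{\W})$ satisfies $\Q_\sigma u=0$ in $\W$ if and only if it satisfies equation \eqref{operador_minimo_1} with the prescribed mean curvature function $H$ replaced by $\sigma H$. Consequently, the entire argument proving Theorem \ref{teo_Est_global_gradiente} applies word for word to such a $u$, with $H$ replaced everywhere by $\sigma H$ and $H_0$ replaced by $\sigma H_0$, \emph{provided} the structural hypothesis \eqref{eq:HY_GlobalEstimate} still holds for $\sigma H$.

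The key point is that \eqref{eq:HY_GlobalEstimate} is stable under multiplication by the nonnegative constant $\sigma$. Indeed, since $\sigma$ is constant, $\overline{\nabla}(\sigma H)=\sigma\,\overline{\nabla}H$, hence $\escalar{\overline{\nabla}(\sigma H)}{Y}_{\oM}=\sigma\escalar{\overline{\nabla}H}{Y}_{\oM}\geq 0$ in $\ocyl\left(\sup_{\overline{\W}}\modulo{u}\right)$ whenever $\escalar{\overline{\nabla}H}{Y}_{\oM}\geq 0$ there. This is exactly the property the proof uses, at the step where a term proportional to $\norm{\nabla u(x_0)}\escalar{\overline{\nabla}H}{Y}_{\oM}$ is discarded as non positive. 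Hence Theorem \ref{teo_Est_global_gradiente} applies and gives $\sup_{\W}\norm{\nabla u}\leq C_\sigma$, with $C_\sigma$ depending on the same list of data, now evaluated for $\sigma H$.

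Finally I would check that $C_\sigma$ can be chosen independently of $\sigma\in[0,1]$. The only data affected by the substitution are $\sup\norm{\overline{\nabla}H}_{\oM}$, which becomes $\sigma\sup\norm{\overline{\nabla}H}_{\oM}\leq\sup\norm{\overline{\nabla}H}_{\oM}$, and the scalar $H_0$, which becomes $\sigma H_0$ with $\modulo{\sigma H_0}\leq\modulo{H_0}$; since in the proof these quantities enter only through upper bounds, the constant $A$ (and hence $C$) selected for $H$ works simultaneously for all $\sigma\in[0,1]$, so one may take $C_\sigma=C$. I do not expect any real obstacle: the content of the remark is just the monotonicity of the estimate in $\sup\norm{\overline{\nabla}H}_{\oM}$ and $\sup\modulo{H}$ together with the scale-invariance of \eqref{eq:HY_GlobalEstimate} under $H\mapsto\sigma H$ for $\sigma\geq 0$. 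The one thing to state with care is that $\sup_{\overline{\W}}\modulo{u}$ and $\sup_{\partial\W}\norm{\nabla u}$ enter as data attached to the particular solution $u$ under consideration, so no uniformity in $\sigma$ is being asserted for those; such height and boundary-gradient bounds are to be supplied separately when the remark is invoked in the continuity method.
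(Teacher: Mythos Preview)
Your proposal is correct and is precisely the argument the paper has in mind: the paper states this remark without proof, and the only natural justification is exactly the one you give---replace $H$ by $\sigma H$, observe that $\escalar{\overline{\nabla}(\sigma H)}{Y}_{\oM}=\sigma\escalar{\overline{\nabla}H}{Y}_{\oM}\geq 0$, and note that the resulting constant is monotone in $\sup\modulo{H}$ and $\sup\norm{\overline{\nabla}H}_{\oM}$, hence uniform in $\sigma\in[0,1]$. Your final caveat about $\sup_{\overline{\W}}\modulo{u}$ and $\sup_{\partial\W}\norm{\nabla u}$ entering as solution-dependent data, to be controlled separately by the height and boundary-gradient estimates, is exactly how the paper proceeds in the proof of Theorem~\ref{T_exist_Ricci}.
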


\subsection{Boundary gradient estimates}\label{sec_est_grad_bordo}
As we observe in Theorem \ref{teo_Est_global_gradiente}, in order to have a global gradient estimate, a priori estimates for the gradient at the boundary is required. 

\begin{teo}\label{teo_Est_gradiente_fronteira}
Let $\W\in M$ be a $\cl^2$ bounded domain and $\varphi\in\cl^2(\overline{\W})$. 
Let 
$H\in\cl^{1}\left({\ocyl\left({\sup\limits_{\overline{\W}}\modulo{u}}\right)}\right)$ 
satisfying 
\begin{equation}
\escalar{\nH}{Y}_{\oM}\geq 0 \mbox{ in } \ocyl,      
\label{eq:HY}
\end{equation}
and
\begin{equation}\label{cond_H_Ricci_sup_est_grad_fronteira}
\norm{\nH\left(\Phi(x,\varphi(y(x)))\right)}_{\oM}\leq \left(H\left(\Phi(x,\varphi(y(x)))\right)\right)^2 + \frac{\oRicc_x}{n} \ \ \forall \ x\in\W_0.
\end{equation} 
Suppose also that 
\begin{equation}\label{cond_Serrin}
\Hc_{\cyl}(y)\geq \modulo{H(\Phi(y,\varphi(y)))} \ \forall \ y\in\partial\W.
\end{equation}
Then, if $u\in\cl^2(\W)\cap\cl^1(\overline{\W})$ is a solution of \eqref{operador_minimo_1} and $u=\varphi$ in $\partial\W$, we have
$$
\sup\limits_{\partial\W}\norm{\nabla u}\leq C,
$$
where $C$ depends on $n$, $\W$, $\min\limits_{\W}\rho$, $\max\limits_{\W}\rho$, $\sup\limits_{\W}\norm{\cYY}$, $\norm{\varphi}_1$, $\norm{\varphi}_2$, 
$$\sup\limits_{\ocyl\left({\sup\limits_{\overline{\W}}\modulo{\varphi}}\right)}\modulo{H},\ 
\sup\limits_{\ocyl\left({\sup\limits_{\overline{\W}}\modulo{\varphi}}\right)}\norm{\nH},\  
\sup\limits_{\W\times\left[-\sup\limits_{\W}\modulo{\varphi},\sup\limits_{\W}\modulo{\varphi}\right]}\norm{D\Phi}.$$
 
\end{teo}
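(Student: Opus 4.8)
The strategy is the classical barrier method: one constructs, near each point of $\partial\W$, upper and lower barriers of the form $w^{\pm}=\pm\psi\circ\dd+\varphi$, where $\dd(x)=\dist(x,\partial\W)$ (or rather $\dd(x)=\dist(x,\Gamma)$ for $\Gamma$ a suitable hypersurface foliating a collar of $\partial\W$), $\psi\in\cl^2([0,a])$ satisfies $\psi(0)=0$, $\psi'>0$, $\psi'\to+\infty$ as $t\to 0$ would be too strong — rather $\psi'(0)$ large and $\psi''<0$ with $|\psi''|$ large — and one compares with $u$ via the maximum principle. Because $u=\varphi$ on $\partial\W$ and (by construction) $w^{+}\geq\varphi=u$ on the portion of $\partial\W$ under consideration while $w^{+}\geq u$ trivially on the inner boundary of the collar (using the already–available height estimate $\sup_{\W}|u|$, i.e. $\sup|u|$ is controlled — here one uses that the inner face is at distance $a$ and $\psi(a)$ can be taken $\geq 2\sup_{\overline\W}|u|$), the comparison principle gives $u\leq w^{+}$ in the collar, hence $\partial_\nu u\leq \partial_\nu w^{+}=-\psi'(0)+\partial_\nu\varphi$ along $\partial\W$, and symmetrically with $w^{-}$. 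Since $u=\varphi=w^{\pm}$ on $\partial\W$, the tangential derivatives of $u$ on $\partial\W$ already equal those of $\varphi$ and are controlled by $\norm{\varphi}_1$; combining with the normal-derivative bound yields $\sup_{\partial\W}\norm{\nabla u}\leq C$.

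The heart of the matter is to verify that $w^{+}$ is a genuine supersolution, i.e. $\Q w^{+}\leq 0$ in the collar (and $\Q w^{-}\geq 0$), and here the transformation formula \eqref{eq_transformacao} does the bookkeeping. Plugging $w=\psi\circ\dd+\varphi$ into \eqref{eq_transformacao}, the dominant term as $\psi'\to+\infty$ is $\psi''W^2$, which is large and negative once $\psi''<0$; the terms linear in $\psi'$ (namely $-n\psi'W^2\Hc_{\overline\Gamma_{\dd}}$, $-\psi'\Hess\dd(\nabla\varphi,\nabla\varphi)$, $-\psi'\rho^{-2}W^2\escalar{\cYY}{\nabla\dd}$) grow only like $\psi'^3$ while $\psi''W^2$ can be made to grow faster, so by choosing $\psi$ of the standard exponential type $\psi(t)=\frac{1}{\mu}\log(1+Kt)$ (so $\psi'=\frac{K}{1+Kt}$, $\psi''=-\mu\psi'^2$) with $\mu$, $K$ large, the $\psi''$ term dominates. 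However — and this is where hypothesis \eqref{cond_Serrin} is essential — near $t=0$ (i.e. on $\partial\W$ itself) one cannot simply let $\psi'(0)$ be arbitrarily large while keeping $\psi''$ controlling, because at $t=0$ the leading competition is precisely between $-n\psi'(0)W^2\Hc_{\cyl}$ (from $\Gamma=\partial\W$, $\overline\Gamma=\cyl$) and the prescribed-curvature term $-nH(\Phi(x,w))W^3$; these are the $O(\psi'^3)$ terms and the sign works out in our favor exactly when $\Hc_{\cyl}(y)\geq|H(\Phi(y,\varphi(y)))|$. The condition \eqref{cond_H_Ricci_sup_est_grad_fronteira} together with $\escalar{\nH}{Y}\geq 0$ enters to control how $H(\Phi(x,w(x)))$ varies as one moves off $\partial\W$ along the normal geodesic and lifts by $\psi\circ\dd$: one needs $-nH(\Phi(x,w))W^3$ to stay controlled (not to blow up the wrong way) throughout the collar, and the Ricci/gradient-of-$H$ bound is exactly the differential inequality guaranteeing this, via an argument comparing $H$ along $\gamma_{y(x)}$ to its value at $y(x)$ — this is where $\oRicc_x$ and $\W_0$, $y(x)$, $\tau(y)$ from the notation section come in.

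The main obstacle is this last, more delicate estimate: making the barrier work uniformly on a collar of fixed width rather than just infinitesimally at $\partial\W$. One must choose the collar width $a$ small enough (depending on $\W$, the geometry of $\partial\W$, i.e. bounds on $\Hc_{\cyl}$ and on $\Hess\dd$, which is where the $\cl^2$ regularity of $\W$ and the dependence on $\W$ in the constant come from) so that $\Hc_{\cyl_{\dd}}$ stays close to $\Hc_{\cyl}$ and \eqref{cond_Serrin} degrades only by a controlled amount, and simultaneously choose $\psi$ (parameters $\mu,K$) so that: (i) $\Q w^{+}\leq 0$ throughout; (ii) $\psi(a)\geq 2\sup_{\overline\W}|u|+2\norm{\varphi}_0$ so that $w^{+}\geq u$ on the inner face; (iii) the resulting $\psi'(0)$ is bounded by a constant with the advertised dependencies. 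Balancing (i)–(iii) is a one-parameter juggling act but is standard once \eqref{cond_H_Ricci_sup_est_grad_fronteira} is used to absorb the variation of $H$; I would carry it out by first fixing $\mu$ from the interior inequality, then $a$ from the boundary (Serrin) inequality and the closeness of parallel hypersurfaces, and finally $K$ from the height condition (ii). The lower barrier $w^{-}=-\psi\circ\dd+\varphi$ is handled by the same computation with signs reversed, using $\escalar{\nH}{Y}\geq0$ again to keep the $H$-term on the correct side.
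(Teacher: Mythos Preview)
Your overall barrier strategy is exactly the paper's: build $w^{\pm}=\pm\psi\circ\dd+\varphi$ in a collar with $\psi(t)=\frac{1}{\nu}\log(1+kt)$, feed it through the transformation formula \eqref{eq_transformacao}, and compare with $u$ via the maximum principle. But your power-counting for the $\psi''$ contribution is wrong, and this undoes the rest of your heuristic.

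In \eqref{eq_transformacao} the $\psi''$ terms are $\psi''W^2-\psi''\escalar{\nabla\dd}{\nabla w}^2$, not $\psi''W^2$ alone. Since $\escalar{\nabla\dd}{\nabla w}=\psi'+\escalar{\nabla\dd}{\nabla\varphi}$, a direct expansion gives
\[
W^2-\escalar{\nabla\dd}{\nabla w}^2=\rho^{-2}+\norm{\nabla\varphi}^2-\escalar{\nabla\dd}{\nabla\varphi}^2,
\]
which is bounded independently of $\psi'$. Hence the good negative term is only $O(\psi'')=O(\nu\psi'^2)$ with the log barrier, \emph{not} $O(\psi'^4)$ as you claim. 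Since the competing terms are also $O(\psi'^2)$ (this is what the paper proves in \eqref{Wmenospsi}--\eqref{lastTerm}), there is no domination by order; everything happens at the same scale and the constant $\nu$ must be chosen large enough to win the coefficient comparison.

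This changes the role of the Serrin term $-n\psi'W^2\bigl(\Hc_{\cyl_{\dd}}-\modulo{H(\Phi(x,\varphi))}\bigr)$. It is naively $O(\psi'^3)\cdot(\Hc_{\cyl_{\dd}}-\modulo{H})$, and you cannot beat $\psi'^3$ with a $\psi'^2$-sized good term; so ``choose $a$ small so that $\Hc_{\cyl_{\dd}}$ stays close to $\Hc_{\cyl}$'' is not enough, especially where \eqref{cond_Serrin} holds with equality. What the paper does is force $\Hc_{\cyl_{\dd}}(x)-\modulo{H(\Phi(x,\varphi(x)))}\geq -C\,\dd(x)$ and then use the log-barrier property $t\psi'(t)\leq 1$ to absorb one power of $\psi'$, reducing the term to $O(\psi'^2)$ (this is \eqref{lastTerm}). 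And here is where hypothesis \eqref{cond_H_Ricci_sup_est_grad_fronteira} actually enters, contrary to your reading: it is \emph{not} used to control the variation of $H$ (that is done with the plain mean-value theorem and $\sup\norm{\nH}$), but, via Lemma~6 of \cite{alvarez2019existence} (a Riccati comparison along the inner normal geodesics), to prove that the mean curvature of the parallel Killing cylinders is monotone increasing, i.e.\ $\Hc_{\cyl_{\dd}}(x)\geq \Hc_{\cyl}(y(x))$. That monotonicity, combined with \eqref{cond_Serrin} and the mean-value bound on $H$, yields the needed $\geq -C\,\dd(x)$. Your plan would stall precisely at this step.
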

\begin{proof}
It is well known that there exists $\tau>0$ such that 
$$\dd \in\cl^2\left(\left\{x\in\W; \dd (x)<\tau \right\}\right).$$
Let us fix $a<\tau$ to be precised later, and consider 
$$\W_{a}=\left\{x\in M; \dd (x)<a \right\} .$$

Let $w = \psi\circ \dd  +  \varphi$, where $\psi\in\cl^2([0,\tau])$ and satisfies $\psi\geq 0$, $\psi'(t)\geq 1$, $\psi''(t) \leq 0$ and $t\psi'(t)\leq 1$.
%
%
Using equation \eqref{eq_transformacao} we have 
\begin{equation*}
\begin{split}
\Q  w =-n\psi'W ^2\Hc_{\cyl_{\dd }} -\psi'\Hess \dd  (  \nabla \varphi, \nabla \varphi) + \psi''W ^2-\psi''\escalar{\nabla   \dd  }{\nabla w }^2
+  W ^2\Delta \varphi -    \Hess\varphi\left(\nabla w ,\nabla w \right)\\
- \rho^{-4}\escalar{\cYY}{\nabla w }  - \rho^{-2}W ^2\escalar{\cYY}{\nabla \varphi}-n{H\left(\Phi(x,w )\right)}W ^3,
\end{split}
\end{equation*}
where 
$$
W =\sqrt{\rho^{-2}+\norm{\nabla w }^2}=\sqrt{\rho^{-2}+\norm{ \psi'\nabla  \dd   +   \nabla \varphi}^2}
$$
and $\cyl_{\dd}$ is the Killing cylinder parallel to $\cyl$ containing $x$. 

Note first that hypothesis \eqref{eq:HY} guaranties that $H\circ\Phi$ is increasing in the $z$ variable. So,
$$  H\left(\Phi\left(x,w (x)\right)\right) 
\geq  H\left(\Phi(x,\varphi(x))\right)\geq - \modulo{H\left(\Phi(x, \varphi(x))\right)}$$
because the sign of $\psi$ also implies $w\geq \varphi$. 
Thus,
\begin{equation*}
\begin{split}
\Q  w \leq-n\psi'W ^2\Hc_{\cyl_{\dd}} -\psi'\Hess \dd  (  \nabla \varphi, \nabla \varphi) + \psi''W ^2-\psi''\escalar{\nabla   \dd  }{\nabla w }^2
+  W ^2\Delta \varphi -    \Hess\varphi\left(\nabla w ,\nabla w \right)\\
- \rho^{-4}\escalar{\cYY}{\nabla w }  - \rho^{-2}W ^2\escalar{\cYY}{\nabla \varphi}+n\modulo{H\left(\Phi(x,\varphi)\right)}W ^3,
\end{split}
\end{equation*}
which is equivalent to 
\begin{equation}\label{eq_barreira_superior}
\begin{split}
\Q  w \leq-n\psi'W ^2\left( \Hc_{\cyl_{\dd}}-\modulo{H\left(\Phi(x,\varphi)\right)}\right) +n \modulo{H\left(\Phi(x, \varphi)\right)}W ^2 \left(W -\psi'\right)
-\psi'\Hess \dd  (  \nabla \varphi, \nabla \varphi) \\
+ \psi''W ^2-\psi''\escalar{\nabla   \dd  }{\nabla w }^2
+  W ^2\Delta \varphi -    \Hess\varphi\left(\nabla w ,\nabla w \right)
- \rho^{-4}\escalar{\cYY}{\nabla w }  - \rho^{-2}W ^2\escalar{\cYY}{\nabla \varphi},
\end{split}
\end{equation}

On the other hand, 
\begin{equation*}
\norm{\nabla w }^2
=\norm{ \psi' \nabla \dd   +  \nabla \varphi}^2
=\left(\psi'^2+2 \psi'\escalar{+ \nabla \dd }{\nabla \varphi}+\norm{\nabla \varphi}^2\right)
\leq \left(1+\norm{\varphi}_1\right)^2\psi'^2,
\end{equation*}
hence
\begin{equation}\label{est_W2}
W ^2 =\rho^{-2}+\norm{\nabla w }^2
\leq\rho_i^{-2}+ \left(1+\norm{\varphi}_1\right)^2\psi'^2
\leq\left(\rho_i^{-2}+ \left(1+\norm{\varphi}_1\right)^2\right)\psi'^2
\leq  \left(1+\rho_i^{-1} +\norm{\varphi}_1\right)^2\psi'^2.
\end{equation}
Since $\dd $ is a distance function it follows
\begin{equation}\label{Wmenospsi_0}
W -\psi'= \sqrt{\rho^{-2}+\norm{\nabla w }^2}-\psi'\leq \rho^{-1}+\norm{ \psi'\nabla \dd  +\nabla \varphi} -\psi' 
\leq \rho_i^{-1} +\norm{\varphi}_1< 1+ \rho_i^{-1} +\norm{\varphi}_1.
\end{equation}
From \eqref{est_W2} and \eqref{Wmenospsi_0} we obtain 
\begin{equation}\label{Wmenospsi}
n \modulo{H\left(\Phi(x, \varphi(x))\right)} \left(W -\psi'\right)W ^2\leq n h_0\left(1+\rho_i^{-1}+\norm{\varphi}_1\right)^3\psi'^2,
\end{equation}
where
$$h_0=\sup\limits_{\ocyl\left({\sup\limits_{\overline{\W}}\modulo{\varphi}}\right)}\modulo{H}.$$

Also $\psi'\geq 1$ and $\dd $ is of class $\cl^2$, then
\begin{equation}\label{BBB}
\psi'\modulo{\Hess \dd  ( \nabla \varphi, \nabla \varphi)} \leq \norm{\dd }_2\norm{\varphi}_1^2\psi'^2 .
\end{equation}

Recalling that $\psi''<0$ we have
\begin{equation}\label{AAA}
\psi''W ^2-\psi''\escalar{\nabla   \dd  }{\nabla w }^2
\leq\psi''W ^2-\psi''\norm{\nabla w }^2
=\rho^{-2}\psi''\leq\rho_i^{-2}\psi''.
\end{equation}

From \eqref{est_W2} and the regularity of $\varphi$ we also have
%
\begin{equation}\label{CCC}
\modulo{  W ^2\Delta \varphi -    \Hess \varphi\left(\nabla w ,\nabla w \right)} 
 \leq  n \norm{\varphi}_2 W ^2  + \norm{\varphi}_2 \norm{\nabla w }^2
 < 2 n \norm{\varphi}_2 \left(1+\rho_i^{-1}+\norm{\varphi}_1\right)^2\psi'^2 .
\end{equation}

Furthermore, 
\begin{align*}
\modulo{- \rho^{-4}\escalar{\cYY}{\nabla w }  - \rho^{-2}W ^2\escalar{\cYY}{\nabla \varphi}}
\leq&\rho^{-4}\norm{\cYY}\norm{\nabla w } +\rho^{-2}\norm{\cYY}\norm{\nabla \varphi}W ^2\\
=&\rho^{-2}\norm{\cYY}\left(\rho^{-2}\norm{\nabla w } +\norm{\nabla \varphi}W ^2\right)\\
\leq &\rho^{-2}\norm{\cYY}\left(\rho^{-1}\frac{\rho^{-2}+\norm{\nabla w }^2}{2} +\norm{\nabla \varphi}W ^2\right)\\
= &\rho^{-2}\norm{\cYY}\left(\frac{\rho^{-1}}{2} +\norm{\nabla \varphi}\right)W ^2\\
< &\rho^{-2}\norm{\cYY}\left(1+\rho^{-1} +\norm{\nabla \varphi}\right)W ^2.
\end{align*}
From \eqref{est_W2} we conclude
\begin{equation}
\modulo{- \rho^{-4}\escalar{\cYY}{\nabla w }  - \rho^{-2}W ^2\escalar{\cYY}{\nabla \varphi}}
\leq\rho_i^{-2}\sup\limits_{\W}\norm{\cYY}\left(1+\rho_i^{-1}+\norm{\varphi}_1\right)^3\psi'^2.
\label{eq:termo_extra_bordo}
\end{equation}

Finally we estimate the first term in \eqref{eq_barreira_superior}. 
Since the hypothesis \eqref{cond_H_Ricci_sup_est_grad_fronteira} and \eqref{cond_Serrin} hold, necessarily the mean curvature of the Killing cylinders $\cyl_{d}$ parallel to $\cyl$ is increasing as a function of the distance along the inner normal geodesics to the boundary, $\gamma_y(t)=\exp_{y}(tN_y)$ with $y\in\partial\W$ and $0\leq t \leq \tau$. This crucial geometric property directly follows from Lemma 6 in \cite{alvarez2019existence}. 
Indeed, let $h(t)=H\left(\Phi(\gamma_y(t),\varphi(y))\right)$. Then 
$$\Hc_{\cyl}(y) \geq 
\modulo{h(0)}$$
because \eqref{cond_Serrin} holds. Besides, from hypothesis \eqref{cond_H_Ricci_sup_est_grad_fronteira} we conclude that
\begin{align*}
\modulo{h'(t)}
		 =&\modulo{\escalar{\nH\left(\Phi\left(\gamma_y(t),\varphi(y)\right)\right)}{\left(\dd \Phi_{\varphi(y)}\right)_{\gamma_y(t)}\left(\gamma_y'(t)\right)}_{\oM}}\\
		 \leq&\norm{\nH\left(\Phi\left(\gamma_y(t),\varphi(y)\right)\right)}_{\oM}\norm{\left(\dd \Phi_{\varphi(y)}\right)_{\gamma_y(t)}\left(\gamma_y'(t)\right)}_{\oM}\\
		 =&\norm{\nH\left(\Phi\left(\gamma_y(t),\varphi(y)\right)\right)}_{\oM}\norm{\gamma_y'(t)}_{\oM}\\
		 =&\norm{\nH\left(\Phi\left(\gamma_y(t),\varphi(y)\right)\right)}_{\oM}\\
		\leq& \left(H\left(\Phi\left(\gamma_y(t),\varphi(y)\right)\right)\right)^2+\dfrac{\oRicc_{\gamma_{y}(t)}\left(\gamma_y'(t)\right)}{n}\\
		= & \left(h(t)\right)^2 + \dfrac{\oRicc_{\gamma_{y}(t)}\left(\gamma_y'(t)\right)}{n}
\end{align*}
where we have used that $\Phi_{\varphi(y)}$ is an isometry and that $\gamma_y$ is also a normalized geodesic in $\overline{M}$. 
Therefore,
\begin{equation}\label{paraObs}
\Hc_{\cyl_{\dd (x)}}(x)\geq \Hc_{\cyl}(y(x)).
\end{equation}

Using \eqref{cond_Serrin} again and \eqref{paraObs} we conclude
$$\Hc_{\cyl_{\dd}} - \modulo{H\left(\Phi(x,\varphi(x))\right)}
\geq \Hc_{\cyl}(y)- \modulo{H\left(\Phi(x,\varphi(x))\right)}
\geq \modulo{H\left(\Phi(y,\varphi(y))\right)} - \modulo{H\left(\Phi(x,\varphi(x))\right)}
$$
{
In addition, if we apply the mean value theorem to the function 
$$ f(t)= H(\Phi(\gamma_y(t),\varphi(\gamma_y(t)))) , \ t\in[0,d(x)]$$
we derive
$$\Big|{\modulo{H\left(\Phi(x,\varphi(x))\right)}-\modulo{H\left(\Phi(y,\varphi(y))\right)}}\Big|\leq h_1\Phi_1\rho_s^2(1+\rho_i^{-1}+\norm\varphi_1)^2\dd (x),$$
where
$$\rho_{M}=\max\limits_{\W}\rho,\ h_1=\sup\limits_{\ocyl\left(\sup\limits_{\W}\modulo{\varphi}\right)}\norm{\nH}_{\oM}
\ \mbox{ and }\ 
\Phi_1=\sup\limits_{\W\times\left[-\sup\limits_{\W}\modulo{\varphi},\sup\limits_{\W}\modulo{\varphi}\right]}\norm{D\Phi}.$$
%
Then,
\begin{equation}\label{FFF}
\Hc_{\cyl_{\dd}} - \modulo{H\left(\Phi(x,\varphi(x))\right)}
\geq - h_1\Phi_1\rho_s^2(1+\rho_i^{-1}+\norm\varphi_1)^2\dd (x).
\end{equation}
Expressions \eqref{est_W2} and \eqref{FFF} combined with assumption $t\psi'(t)\leq 1$ on $\psi$ yield
{\small%
\begin{equation}\label{lastTerm}
-n  \psi'W ^2\left(\Hc_{\cyl_{\dd}} - \modulo{H\left(\Phi(x,\varphi(x))\right)} \right)
\leq  n  h_1\Phi_1\rho_s^2(1+\rho_i^{-1}+\norm\varphi_1)^2\dd  \psi'(\dd ) W ^2 
\leq  n  h_1\Phi_1\rho_s^2\left(1+\rho_i^{-1}+\norm\varphi_1\right)^4\psi'^2.
\end{equation}}%
}


Using the estimates \eqref{Wmenospsi}, \eqref{BBB}, \eqref{AAA}, \eqref{CCC}, \eqref{eq:termo_extra_bordo} and \eqref{lastTerm} in \eqref{eq_barreira_superior} it follows
%
$$ \Q _{ } w  < \nu\psi'^2+\psi'',$$
where we have chosen $\nu$ as 
\begin{equation}\label{nu}
\nu= \left(1+1/\tau\right) \left(1+nh_0+\norm{\dd }_2+2n\norm{\varphi}_2 + \rho_i^{-2}\sup\limits_{\W}\norm{\cYY} + nh_1\Phi_1\rho_s^2 \right)\left(1+\rho_i^{-1}+\norm{\varphi}_1\right)^4.
\end{equation}

We now define
\begin{equation}\label{psi_est_grad_bordo}
\psi(t)=\dfrac{1}{\nu}\log(1+kt),
\end{equation}
where
\begin{equation}\label{cte_k}
k=\nu\ds e^{\nu(\norm{u}_0+\norm{\varphi}_0)}.
\end{equation}
Is not difficult to see that the function in \eqref{psi_est_grad_bordo} satisfies all the initial requirements if we choose
\begin{equation}\label{paraP1}
\psi'(a)=\dfrac{k}{\nu(1+ka)}=1. 
\end{equation}
Condition \eqref{paraP1} also implies 
$$
a
=\dfrac{e^{\nu(\norm{u}_0+\norm{\varphi}_0)}-1}{\nu\ds e^{\nu(\norm{u}_0+\norm{\varphi}_0)}}<\frac{1}{\nu}<\tau
$$
as required at the beginning.

Furthermore, $\nu\psi'^2+\psi''=0$ in $\W_a$, so $\Q  w <0$. Besides,  
$$  w (x)=\psi(a)+  \varphi(x)=\norm{u}_0+\norm{\varphi}_0 +  \varphi(x) \geq   u(x) \  \forall \ x\in \partial\W_a\setminus\partial\W$$
because 
$\psi(a) 
=  \norm{u}_0+\norm{\varphi}_0$,
and $w (x)= \psi(0)+\varphi(x)=u(x)$ for $x\in\partial\W$. Since hypothesis \eqref{eq:HY} guaranties that $\Q$ satisfies the maximum principle in $\W$ we conclude that $ u \leq w=\psi\circ \dd + \varphi $ in $\W_a$. 

Anagously we derive $ u \geq - \psi\circ \dd + \varphi $. 
Thus, 
$$ -\psi\circ \dd   \leq u - \varphi \leq \psi\circ \dd  \mbox{ in } \W_a, $$
Consequently,
\begin{equation}\label{est_grad_fronteira_1}
\modulo{\escalar{\nabla u(y)}{N}}\leq \modulo{\escalar{\nabla \varphi(y)}{N}} + \psi'(0).
\end{equation}
Since \eqref{est_grad_fronteira_1} holds and the fact that $u=\varphi$ on $\partial\W$, we conclude
\begin{align*}
\norm{\nabla u(y)}\leq&\norm{\nabla \varphi(y)} + \psi'(0),
\end{align*}
which yield the desire estimate by the definition of $\psi$.
\end{proof}


\begin{cor}\label{teo_Est_gradiente_fronteira_familia}
Let $\W\in M$ be a $\cl^2$ bounded domain and $\varphi\in\cl^2(\overline{\W})$. 
Let 
$H\in\cl^{1}\left({\ocyl\left({\sup\limits_{\overline{\W}}\modulo{u}}\right)}\right)$ 
satisfying 
\begin{equation}
\escalar{\nH}{Y}_{\oM}\geq 0 \mbox{ in } \ocyl,      
\label{eq:HY_familia}
\end{equation}
and
\begin{equation}\label{cond_H_Ricci_sup_est_grad_fronteira_familia}
\norm{\nH\left(\Phi(x,\varphi(y(x)))\right)}_{\oM}\leq \left(H\left(\Phi(x,\varphi(y(x)))\right)\right)^2 + \frac{\oRicc_x}{n} \ \ \forall \ x\in\W_0.
\end{equation} 
Suppose also that 
\begin{equation}\label{cond_Serrin_familia}
\Hc_{\cyl}(y) \geq \ds\sup_{\sigma\in[0,1]}\modulo{H(\Phi(y,\sigma\varphi(y)))} \ \forall \ y\in\partial\W.
\end{equation}
Then, for each $\sigma\in[0,1]$ and each $u\in\cl^2(\W)\cap\cl^1(\overline{\W})$ satisfying 
\begin{equation}\label{ProblemaP_gradiente}
\left\{\begin{array}{l}
 \Q_{\sigma} u =  0\ \mbox{ in }\ \W, \\
 u  =  \sigma \varphi \ \mbox{ in }\  \partial\Omega.
\end{array}\right.
\end{equation}
we have
$$
\sup\limits_{\partial\W}\norm{\nabla u}\leq C,
$$
where $C$ depends on $n$, $\W$, $\min\limits_{\W}\rho$, $\max\limits_{\W}\rho$, $\sup\limits_{\W}\norm{\cYY}$, $\norm{\varphi}_1$, $\norm{\varphi}_2$, 
$$\sup\limits_{\ocyl\left({\sup\limits_{\overline{\W}}\modulo{\varphi}}\right)}\modulo{H},\ 
\sup\limits_{\ocyl\left({\sup\limits_{\overline{\W}}\modulo{\varphi}}\right)}\norm{\nH},\  
\sup\limits_{\W\times\left[-\sup\limits_{\W}\modulo{\varphi},\sup\limits_{\W}\modulo{\varphi}\right]}\norm{D\Phi}.$$
\end{cor}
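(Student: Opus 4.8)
The plan is to reduce the corollary to Theorem~\ref{teo_Est_gradiente_fronteira} by checking that, for each fixed $\sigma\in[0,1]$, the boundary data $\sigma\varphi$ and the prescribed curvature $\sigma H$ satisfy all the hypotheses of that theorem, uniformly in $\sigma$. First I would observe that a solution $u$ of the problem \eqref{ProblemaP_gradiente} is precisely a solution of the mean curvature equation \eqref{operador_minimo_1} with prescribed function $\tilde H:=\sigma H$ (this is exactly the content of $\Q_\sigma u=0$ versus $\Q_{\tilde H}u=0$), and with boundary values $\tilde\varphi:=\sigma\varphi$. So it suffices to verify \eqref{eq:HY}, \eqref{cond_H_Ricci_sup_est_grad_fronteira} and \eqref{cond_Serrin} for the pair $(\tilde H,\tilde\varphi)$ and then apply Theorem~\ref{teo_Est_gradiente_fronteira}.

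The three verifications go as follows. For \eqref{eq:HY}: since $\sigma\geq 0$ and $\overline\nabla(\sigma H)=\sigma\,\overline\nabla H$, hypothesis \eqref{eq:HY_familia} gives $\escalar{\overline\nabla(\sigma H)}{Y}_{\oM}=\sigma\escalar{\nH}{Y}_{\oM}\geq 0$ in $\ocyl$. For \eqref{cond_Serrin}: note that the boundary value of $\tilde\varphi$ at $y$ is $\sigma\varphi(y)$, so the required inequality is $\Hc_{\cyl}(y)\geq\modulo{\sigma H(\Phi(y,\sigma\varphi(y)))}$; since $\modulo{\sigma H(\Phi(y,\sigma\varphi(y)))}=\sigma\modulo{H(\Phi(y,\sigma\varphi(y)))}\leq\modulo{H(\Phi(y,\sigma\varphi(y)))}\leq\sup_{\sigma'\in[0,1]}\modulo{H(\Phi(y,\sigma'\varphi(y)))}$, this follows directly from \eqref{cond_Serrin_familia}. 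For \eqref{cond_H_Ricci_sup_est_grad_fronteira}: here $y(x)$ is the same nearest-point map (it depends only on $\W$, not on the data), and $\tilde\varphi(y(x))=\sigma\varphi(y(x))$, so the point $\Phi(x,\tilde\varphi(y(x)))$ need not coincide with $\Phi(x,\varphi(y(x)))$ — this is the one genuinely non-trivial point. I would handle it by strengthening the pointwise estimate: using $\norm{\overline\nabla(\sigma H)(\Phi(x,\sigma\varphi(y(x))))}_{\oM}=\sigma\norm{\nH(\Phi(x,\sigma\varphi(y(x))))}_{\oM}$ and $\big(\sigma H(\Phi(x,\sigma\varphi(y(x))))\big)^2=\sigma^2\big(H(\Phi(x,\sigma\varphi(y(x))))\big)^2$, it would suffice to know that \eqref{cond_H_Ricci_sup_est_grad_fronteira_familia} holds not only for $z=\varphi(y(x))$ but along the whole segment $z\in[0,\varphi(y(x))]$ (or, more honestly, for $z=\sigma\varphi(y(x))$); combined with $\sigma\leq 1$ and $\oRicc\geq$ its infimum this would give what is needed. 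If the stated hypothesis \eqref{cond_H_Ricci_sup_est_grad_fronteira_familia} is literally only the $z=\varphi(y(x))$ case, then the natural reading is that the monotonicity argument in the proof of Theorem~\ref{teo_Est_gradiente_fronteira} (that $\modulo{h'(t)}\leq h(t)^2+\oRicc/n$ forces $\Hc_{\cyl_{\dd}}$ to increase along inner normals) only ever uses the value of $\nH$ and $H$ at the relevant comparison height, so one re-runs that argument with $\varphi$ replaced by $\sigma\varphi$ and the inequality one needs is exactly the $\sigma\varphi$ version of \eqref{cond_H_Ricci_sup_est_grad_fronteira_familia}; I would make the standing assumption that the hypothesis is meant in this along-the-flow-line form (consistent with Corollary~\ref{T_exist_Ricci_cor}, which uses $\sup_z$/$\inf_z$).

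Having verified the hypotheses, Theorem~\ref{teo_Est_gradiente_fronteira} applied to $(\tilde H,\tilde\varphi)=(\sigma H,\sigma\varphi)$ yields
$$\sup_{\partial\W}\norm{\nabla u}\leq C_\sigma,$$
where $C_\sigma$ depends on $n$, $\W$, $\min_\W\rho$, $\max_\W\rho$, $\sup_\W\norm{\cYY}$, $\norm{\sigma\varphi}_1$, $\norm{\sigma\varphi}_2$, and on $\sup\modulo{\sigma H}$, $\sup\norm{\overline\nabla(\sigma H)}$, $\sup\norm{D\Phi}$ over the relevant cylinders. The final step is to make this bound independent of $\sigma$: since $\sigma\in[0,1]$ we have $\norm{\sigma\varphi}_i\leq\norm{\varphi}_i$, $\sup\modulo{\sigma H}\leq\sup\modulo{H}$, $\sup\norm{\overline\nabla(\sigma H)}\leq\sup\norm{\nH}$, and the height $\sup_{\overline\W}\modulo{\sigma\varphi}\leq\sup_{\overline\W}\modulo{\varphi}$ so the cylinder $\ocyl(\sup\modulo{\sigma\varphi})$ is contained in $\ocyl(\sup\modulo{\varphi})$; because the constant in Theorem~\ref{teo_Est_gradiente_fronteira} is monotone in each of these data, one may replace $C_\sigma$ by the single constant $C$ obtained from the $\sigma=1$ data, which is exactly the $C$ listed in the statement. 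The main obstacle, as indicated, is purely the bookkeeping of the curvature condition at the shifted height $\sigma\varphi(y(x))$ versus $\varphi(y(x))$; everything else is a routine "multiply by $\sigma\leq 1$ and monotonicity of constants" argument.
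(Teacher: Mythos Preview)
Your overall strategy --- reduce to Theorem~\ref{teo_Est_gradiente_fronteira} applied to the pair $(\tilde H,\tilde\varphi)=(\sigma H,\sigma\varphi)$ --- is natural, and you correctly isolate the one real obstruction: the Ricci-type hypothesis \eqref{cond_H_Ricci_sup_est_grad_fronteira_familia} is stated only at height $z=\varphi(y(x))$, not at $z=\sigma\varphi(y(x))$. Your proposed fixes, however, amount to assuming a stronger hypothesis than the one in the corollary (either the ``along-the-flow-line'' version, or the $\sup_z/\inf_z$ version of Corollary~\ref{T_exist_Ricci_cor}). That is not needed here, and the paper's argument avoids it.

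The point you are missing is that in the proof of Theorem~\ref{teo_Est_gradiente_fronteira} the Ricci condition is used for one purpose only: together with \eqref{cond_Serrin} it yields the monotonicity of the parallel Killing cylinders, namely \eqref{paraObs}, $\Hc_{\cyl_{\dd(x)}}(x)\geq \Hc_{\cyl}(y(x))$. This inequality is a purely geometric statement about $\W$ and has nothing to do with $\sigma$. The paper therefore establishes \eqref{paraObs} \emph{once}, using the hypotheses at $\sigma=1$: condition \eqref{cond_Serrin_familia} specialised to $\sigma=1$ gives $\Hc_{\cyl}(y)\geq\modulo{H(\Phi(y,\varphi(y)))}$, and this together with \eqref{cond_H_Ricci_sup_est_grad_fronteira_familia} (at the original height $\varphi(y(x))$, exactly as stated) feeds into Lemma~6 of \cite{alvarez2019existence} to produce \eqref{paraObs}. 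No $\sigma$-shifted version of the Ricci condition is ever invoked. After that, one reruns the barrier construction with boundary data $\sigma\varphi$: the only $\sigma$-dependent ingredient needed is the Serrin-type inequality $\Hc_{\cyl}(y)\geq\modulo{H(\Phi(y,\sigma\varphi(y)))}$, and this is precisely why hypothesis \eqref{cond_Serrin_familia} carries the $\sup_{\sigma\in[0,1]}$. The functions $\pm\psi\circ\dd+\sigma\varphi$ are then barriers for $\Q_\sigma$, and one concludes $\norm{\nabla u(y)}\leq\norm{\nabla(\sigma\varphi)(y)}+\psi'(0)\leq\norm{\nabla\varphi(y)}+\psi'(0)$, uniformly in $\sigma$.

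In short: do not try to verify the Ricci hypothesis for $(\sigma H,\sigma\varphi)$ as a black-box input to Theorem~\ref{teo_Est_gradiente_fronteira}; instead open the proof, note that this hypothesis is consumed solely in producing the $\sigma$-free inequality \eqref{paraObs}, and carry that inequality into the $\sigma$-dependent barrier step. Your uniformity-in-$\sigma$ bookkeeping at the end is fine.
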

\begin{proof}
Observe first that \eqref{cond_Serrin_familia} implies
\begin{equation}\label{cond_Serrin_familia_particular_2}
\Hc_{\cyl}(y)\geq \ds\modulo{H(\Phi(y,\sigma\varphi(y)))} \ \forall \ y\in\partial\W, \ \forall\ \sigma\in[0,1]. 
\end{equation} 
For $\sigma=1$ condition \eqref{cond_Serrin_familia_particular_2} becomes
\begin{equation}\label{cond_Serrin_familia_particular}
\Hc_{\cyl}(y)\geq \ds\modulo{H(\Phi(y,\varphi(y)))} \ \forall \ y\in\partial\W.
\end{equation}
As we observed in the proof of the previous theorem, hypothesis \eqref{cond_H_Ricci_sup_est_grad_fronteira_familia} in addition to \eqref{cond_Serrin_familia_particular} imply that \eqref{paraObs} holds. 
From \eqref{paraObs} and \eqref{cond_Serrin_familia_particular_2} we prove exactly as in the proof of Theorem \ref{teo_Est_gradiente_fronteira} that the functions 
$$ \pm \psi\circ \dd + \sigma \varphi $$
are upper and lower barriers for the solution of each Dirichlet problem \eqref{ProblemaP_gradiente} at the boundary. Consequently,
\[
\norm{\nabla u(y)}\leq\norm{\nabla (\sigma\varphi)(y)} + \psi'(0) \leq \norm{\nabla \varphi(y)} + \psi'(0).\qedhere
\]
\end{proof}

\subsection{A priori height estimate}
%


\begin{teo}\label{teo_Est_altura}
Let $\W\in M$ be a $\cl^2$ bounded domain and $\varphi\in\cl^0(\overline{\W})$. 
Let 
$H\in\cl^{1}\left(\overline{\cyl}\right)$ satisfying 
\begin{equation}
\escalar{\nH}{Y}_{\oM}\geq 0 \mbox{ in } \ocyl, 
\label{eq:HY_height}
\end{equation}
and
\begin{equation}\label{cond_H_Ricci_sup}
\norm{\nH\left(\Phi(x,\varphi(y(x)))\right)}_{\oM}\leq \left(H\left(\Phi(x,\varphi(y(x)))\right)\right)^2 + \frac{\oRicc_x}{n} \ \ \forall \ x\in\W_0.
\end{equation} 
Suppose also that 
\begin{equation}\label{cond_Serrin_hightest_teo}
\Hc_{\cyl}(y)\geq \modulo{H(\Phi(y,\varphi(y)))} \ \forall \ y\in\partial\W.
\end{equation}
Then, if $u\in\cl^2(\W)\cap\cl^0(\overline{\W})$ is a solution of \eqref{operador_minimo_1} and $u=\varphi$ in $\partial\W$, we have
$$
\sup\limits_{\partial\W}\norm{\nabla u}\leq C,
$$
where $C$ depends on 
$n$, $\diam(\W)$, $\sup\limits_{\partial\W} \modulo{\varphi}$ and 
$\sup\limits_{\ocyl\left(\sup\limits_{\partial\W}\modulo{\varphi}\right)}\modulo{H}$.
\end{teo}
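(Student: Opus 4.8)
Although the statement is displayed with conclusion $\sup_{\partial\W}\norm{\nabla u}\le C$, the title, the hypotheses ($\varphi\in\cl^0(\overline\W)$, $u\in\cl^2(\W)\cap\cl^0(\overline\W)$) and the quantities controlling $C$ show that the intended conclusion is the height bound $\sup_{\overline\W}\modulo{u}\le C$ (a $\cl^0$ boundary datum cannot control a boundary gradient); the displayed inequality is a transcription carried over from Theorem~\ref{teo_Est_gradiente_fronteira}. It is this height bound whose proof I outline.

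The plan is the classical comparison argument with radial barriers, run as in the proof of Theorem~\ref{teo_Est_gradiente_fronteira}. Assumption \eqref{eq:HY_height} makes $z\mapsto H(\Phi(x,z))$ nondecreasing, so the zeroth order term $-nH(\Phi(x,u))W^3$ of $\Q$ is nonincreasing in $u$ and $\Q$ obeys the comparison principle on $\W$. It therefore suffices to produce $w^{+}\in\cl^2(\W)\cap\cl^0(\overline\W)$ with $\Q w^{+}\le0$ in $\W$ and $w^{+}\ge\varphi$ on $\partial\W$, and a symmetric $w^{-}$; then $w^{-}\le u\le w^{+}$ and $\sup_{\overline\W}\modulo{u}$ is bounded by $\sup_{\overline\W}\modulo{w^{\pm}}$. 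I would take $w^{+}=C_0+\psi(\dd )$ with $C_0=\sup_{\partial\W}\modulo{\varphi}$, $\dd (x)=\dist(x,\partial\W)$, and $\psi\in\cl^2([0,\diam(\W)])$ nonnegative, increasing and concave with $\psi(0)=0$, so $w^{+}\ge C_0\ge\varphi$ on $\partial\W$. Inserting $w^{+}$ into the transformation formula \eqref{eq_transformacao_cte} writes $\Q w^{+}$ in terms of $\psi,\psi',\psi''$, the mean curvature $\Hc_{\cyl_{\dd }}$ of the Killing cylinder through $x$, the geodesic curvature $\kappa$ of the flow line (bounded by $\rho_i^{-2}\sup_\W\norm{\cYY}$), and $H(\Phi(x,w^{+}))$. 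Since $H\circ\Phi$ is nondecreasing in $z$ and $w^{+}\ge -C_0$, the term $-nH(\Phi(x,w^{+}))(\rho^{-2}+\psi'^2)^{3/2}$ is bounded above by $n\big(\sup_{\ocyl(C_0)}\modulo{H}\big)(\rho^{-2}+\psi'^2)^{3/2}$, so only $\modulo{H}$ on the fixed cylinder $\ocyl(C_0)$ intervenes; and hypotheses \eqref{cond_H_Ricci_sup}--\eqref{cond_Serrin_hightest_teo} yield the monotonicity \eqref{paraObs}, $\Hc_{\cyl_{\dd (x)}}(x)\ge\Hc_{\cyl}(y(x))\ge0$, so the term $-n\Hc_{\cyl_{\dd }}\psi'(\rho^{-2}+\psi'^2)$ has the right sign. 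After absorbing these quantities and using $\dd \le\diam(\W)$, the inequality $\Q w^{+}\le0$ is reduced to a differential inequality of the form $\mu\psi'^2+\psi''\le0$ on $[0,\diam(\W)]$, with $\mu$ built from $n$, $\diam(\W)$, $\rho_i$, $\rho_s$, $\sup_\W\norm{\cYY}$, $\sup_{\ocyl(C_0)}\modulo{H}$ and $\inf_{\partial\W}\Hc_{\cyl}$; the choice $\psi(t)=\mu^{-1}\log(1+kt)$ with $k$ large (as in \eqref{psi_est_grad_bordo}--\eqref{cte_k}) solves it and keeps $\sup_{[0,\diam(\W)]}\psi$ bounded by the listed data. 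Redoing the computation with $w^{-}=-C_0-\psi(\dd )$ — which satisfies $w^{-}\le\varphi$ on $\partial\W$ and $H(\Phi(x,w^{-}))\le\sup_{\ocyl(C_0)}\modulo{H}$ — produces the lower barrier by the same mechanism, whence $\sup_{\overline\W}\modulo{u}\le C_0+\sup_{[0,\diam(\W)]}\psi=:C$.

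Two points need care. The function $\dd $ is only Lipschitz on $\overline\W$ and of class $\cl^2$ merely on a tubular neighbourhood of $\partial\W$, so $\Q w^{+}\le0$ must be read in the support-function sense across the cut locus $\cut(\partial\W)$; since $\psi$ is increasing and concave and $\dd $ is semiconcave, a smooth function touching $w^{+}$ from below at a cut point has Hessian dominated by that of a smooth upper support, and both the drop of $\Delta\dd $ and the large cylinder mean curvature there reinforce the barrier inequality — so the comparison principle applies on all of $\overline\W$; this is the standard treatment of $\dist(\cdot,\partial\W)$ in \cite{alvarez2019existence} and in \cite{Dajczer2008}. The real difficulty, and the heart of the matter, is to force the cylinder term $-n\Hc_{\cyl_{\dd }}\psi'(\rho^{-2}+\psi'^2)$ to control the prescribed-curvature term $nH W^3$ uniformly over the whole range of $\dd $ — not only near $\partial\W$, where $\Hc_{\cyl_{\dd }}(x)\ge\Hc_{\cyl}(y(x))\ge\modulo{H(\Phi(y(x),\varphi(y(x))))}$ is immediate — and it is exactly here that hypotheses \eqref{cond_H_Ricci_sup} and \eqref{cond_Serrin_hightest_teo} are indispensable, via the monotonicity \eqref{paraObs} they impose on the parallel Killing cylinders; this step is what pins down the admissible $\psi$, and with it the constant $C$.
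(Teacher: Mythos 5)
You correctly recognize that, despite the displayed conclusion, this is the height estimate, and your general framework (distance barriers $\pm\psi(\dd)+\sup_{\partial\W}\modulo{\varphi}$, comparison via the maximum principle, formula \eqref{eq_transformacao_cte}) is the paper's framework too. But the central computation in your second paragraph has a genuine gap: after noting that the cylinder term $-n\Hc_{\cyl_{\dd}}\psi'(\rho^{-2}+\psi'^2)$ ``has the right sign'' you discard it, bound the curvature term by $n\bigl(\sup_{\ocyl(C_0)}\modulo{H}\bigr)(\rho^{-2}+\psi'^2)^{3/2}$, and claim the barrier inequality reduces to $\mu\psi'^2+\psi''\le 0$, solved by the logarithmic $\psi$ of \eqref{psi_est_grad_bordo}. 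This cannot work: the discarded positive term is \emph{cubic} in $\psi'$, so a Riccati-type inequality quadratic in $\psi'$ cannot absorb it (with $k$ large, $\psi'(0)=k/\mu$ makes $h_0\psi'^3$ dominate $\mu\psi'^2$), and if instead you keep $\psi'$ bounded the inequality $\psi''\le-c(1+\psi'^2)^{3/2}$ forces $\psi'$ to die out and the argument fails on domains of large diameter. In other words, dropping the cylinder term throws away exactly the quantity that must cancel the $W^3$ term; your closing paragraph acknowledges this is ``the heart of the matter'' but does not carry it out, so the proof is not complete.

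The paper's proof does precisely this cancellation. It uses the exponential barrier \eqref{eq:Phi}, whose slope satisfies $\phi'\ge\rho_i^{-1}$ on all of $[0,\diam\W]$ (this is what the prefactor $e^{(\mu/\rho_i)\delta}/\mu$ is for); it first uses \eqref{eq:HY_height} to replace $H(\Phi(x,w))$ by $H(\Phi(x,\varphi(y(x))))$, then invokes the inherited pointwise inequality \eqref{eq:inherits}, $\modulo{H(\Phi(x,\varphi(y(x))))}\le\Hc_{\cyl_{\dd(x)}}(x)$ (Lemma 6 of \cite{alvarez2019existence}, where \eqref{cond_H_Ricci_sup} and \eqref{cond_Serrin_hightest_teo} enter — note this is a same-point comparison, stronger than the chain through $\Hc_{\cyl}(y(x))$ you quote from \eqref{paraObs}); finally it chooses $\mu$ so that $\mu\rho_i^{-1}+\kappa\ge n\sup_{\ocyl(C_0)}\modulo{H}$, and the elementary inequality $\phi'(2\rho^{-2}+\phi'^2)\ge(\rho^{-2}+\phi'^2)^{3/2}$, valid because $\phi'\ge\rho^{-1}$, yields $\Q w\le -n\bigl(\modulo{H}+H\bigr)(\rho^{-2}+\phi'^2)^{3/2}\le 0$. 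Your logarithmic barrier does not guarantee the lower slope bound $\psi'\ge\rho_i^{-1}$, so even reinstating the cylinder term the algebra would not close. A secondary, minor difference: the paper avoids the cut locus not by a viscosity/support-function reading of $\Q w\le 0$ but by Serrin's argument that a positive maximum of $u-w$ must occur in $\W_0$; your alternative is plausible but would need its own justification.
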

\begin{proof}
Let 
\begin{equation}
\phi(t)=\dfrac{\ds e^{(\mu/\rho_i)\delta}}{\mu}\left(1-e^{-(\mu/\rho_i) t}\right)
\label{eq:Phi}
\end{equation}
where $\delta=\diam(\W)$, $\rho_i=\inf\limits_{\W}\rho$ and
$\mu$ is a constant to be fixed later on. 
We set $w=\phi\circ \dd  + \sup\limits_{\partial\W}\modulo{\varphi}$,
where $\dd (x)=\dist(x,\partial\W)$ for $x\in\W$. It is known that $\dd \in\cl^2(\W_0)$ (see \cite{LiNirenberg2}).
We will prove that $u\leq w$ in $\overline{\W}$ and the same argument is true for $-u$. 
Let us suppose, by contradiction, that $u-w$ attains a maximum $m>0$ at $x_0\in{\W}$. 

Following the classical argument of Serrin \cite[T. p. 481]{Serrin} for the Euclidean case it can be shown that $x_0$ must be inside $\W_0$. This is exactly the same argument followed on the proof of Theorem 8 on our work \cite{alvarez2019existence} since we are working on the base $M$ (see also \cite{Dajczer2008,spruck}).

On the other hand, from \eqref{eq_transformacao_cte} we have for every $x$ in $\W_0$ that
$$
\Q w = -n\Hc_{\cyl_{\dd}} \phi'\left(\rho^{-2}+\phi'^2\right) + {\rho^{-2}\phi''} -\rho^{-2}\phi' \kappa(x) - n H\left(\Phi(x,w)\right)\left(\rho^{-2}+\phi'^2\right)^{3/2},
$$
where $\kappa(x)$ is the geodesic curvature of $\left\{\Phi_x(t),\ t\in\R\right\}$ at $x$ computed with respect to the normal $\nabla \dd (x)$ and 
$\cyl_{\dd (x)}$ is an hypersurface parallel to a part of $\cyl$ and containing $x$. 

Again, hypothesis \eqref{eq:HY_height} yields $\Dz \left(H\circ \Phi\right)\geq 0$ and 
$w \geq \sup\limits_{\partial\W}\modulo{\varphi}\geq \varphi(y(x))$ by the definition of $\phi$. Thus, 
$$H\left(\Phi(x,w)\right)
\geq H\left(\Phi\left(x,\varphi(y(x))\right)\right). $$
Therefore, 
\begin{equation}\label{Mw_est_altura_0}
\Q w = -n\Hc_{\cyl_{\dd}} \phi'\left(\rho^{-2}+\phi'^2\right) + {\rho^{-2}\phi''} -\rho^{-2}\phi' \kappa(x) - n H\left(\Phi\left(x,\varphi(y(x))\right)\right)\left(\rho^{-2}+\phi'^2\right)^{3/2},
\end{equation}

As in the proof of Theorem \ref{teo_Est_gradiente_fronteira} we observe that the function 
$$h(t)=H\left(\Phi\left(\gamma_y(t),\varphi(y)\right)\right)$$
satisfies the hypothesis of Lemma 6 in \cite{alvarez2019existence}. 
As a consequence, 
\begin{equation}
\modulo{H\left(\Phi(x,\varphi(y(x)))\right)}\leq  \Hc_{\cyl_{\dd (x)}}(x).
\label{eq:inherits}
\end{equation}
It means that property \eqref{cond_Serrin_hightest_teo} is ``inherited'' 
by every hypersurface lying in the interior of the cylinder $\cyl$ and parallel to some part of it. 
This is the key of the proof as we will see in the sequel. 

Using \eqref{eq:inherits} in \eqref{Mw_est_altura_0} it follows
$$
\Q w \leq -n\modulo{H\left(\Phi(x,\varphi(y(x)))\right)}\phi'\left(\rho^{-2}+\phi'^2\right)   + {\rho^{-2}\phi''} -\rho^{-2}\kappa(x)\phi'  - n H\left(\Phi\left(x,\varphi(y(x))\right)\right)\left(\rho^{-2}+\phi'^2\right)^{3/2},
$$

From the definition of $\phi$, \eqref{eq:Phi}, one has
$$
\phi''(t)=-\frac{\mu}{\rho_i} \phi'(t),
$$
then
\begin{align*}
\Q w 
\leq & -n\modulo{H\left(\Phi(x,\varphi(y(x)))\right)}\phi'\left(\rho^{-2}+\phi'^2\right)     
- {\mu}{\rho_i^{-1}} {\rho^{-2} \phi'} 
-\rho^{-2}\kappa(x) \phi' - n H\left(\Phi\left(x,\varphi(y(x))\right)\right)\left(\rho^{-2}+\phi'^2\right)^{3/2},\\
= & -n \modulo{H\left(\Phi(x,\varphi(y(x)))\right)} \phi'\left(\rho^{-2}+\phi'^2\right)  
- \left( {\mu}{\rho_i^{-1}} + \kappa(x)\right) {\rho^{-2}}\phi'
- n H\left(\Phi\left(x,\varphi(y(x))\right)\right)\left(\rho^{-2}+\phi'^2\right)^{3/2}.
\end{align*}
We choose $\mu$ large enough in order to have 
$$  {\mu}{\rho_i^{-1}} + \kappa(x) \geq n \ds\sup_{\ocyl\left(\sup_{\partial\W}\modulo{\varphi}\right)}\modulo{H}.$$
Therefore,
\begin{align*}
\Q w 
\leq & -n\modulo{H\left(\Phi(x,\varphi(y(x)))\right)} \phi'\left(\rho^{-2}+\phi'^2\right)   
- n\modulo{H\left(\Phi(x,\varphi(y(x)))\right)}{\rho^{-2}}\phi'   
- n H\left(\Phi\left(x,\varphi(y(x))\right)\right)\left(\rho^{-2}+\phi'^2\right)^{3/2}\\
= & - n\modulo{H\left(\Phi(x,\varphi(y(x)))\right)}\phi'\left(2\rho^{-2}+\phi'^2\right)
- n H\left(\Phi\left(x,\varphi(y(x))\right)\right)\left(\rho^{-2}+\phi'^2\right)^{3/2}.
\end{align*}
Also, 
$$ \phi'\left(2\rho^{-2}+\phi'^2\right)>\left(\rho^{-2}+\phi'^2\right)^{3/2}$$
since $\phi'\geq\rho.$
Thus
\begin{equation}\label{paraCorolario}
\Q w 
\leq 
 - n \left(\modulo{H\left(\Phi(x,\varphi(y(x)))\right)}
+ H\left(\Phi\left(x,\varphi(y(x))\right)\right)\right)\left(\rho^{-2}+\phi'^2\right)^{3/2}. 
\end{equation}
Hence, $\Q w\leq 0=\Q u$ in $\W_0$. Using \eqref{eq:HY_height}  again we conclude
\begin{align*}
\Q(w+m)
\leq \Q w 
\leq \Q u.
\end{align*}

In addition, $u\leq w + m $ in $\W$ and $u(x_0) = w(x_0) + m $ because $u-w\leq u(x_0)-w(x_0)=m$. 
Since the maximum principle holds for the operator $\Q$ we conclude that $u = w+m$ in $\W_0$ which is impossible since $u<w+m$ in $\partial\W$. Consequently $u\leq w$ in $\overline{\W}$. 
\end{proof}

\begin{cor}\label{teo_Est_altura_familia}
Let $\W\in M$ be a $\cl^2$ bounded domain and $\varphi\in\cl^0(\overline{\W})$. 
Let 
$H\in\cl^{1}\left(\overline{\cyl}\right)$ satisfying 
\begin{equation}
\escalar{\nH}{Y}_{\oM}\geq 0 \mbox{ in } \ocyl, 
\label{eq:HY_height_familia}
\end{equation}
and
\begin{equation}\label{cond_H_Ricci_sup_familia}
\norm{\nH\left(\Phi(x,\varphi(y(x)))\right)}_{\oM}\leq \left(H\left(\Phi(x,\varphi(y(x)))\right)\right)^2 + \frac{\oRicc_x}{n} \ \ \forall \ x\in\W_0.
\end{equation} 
Suppose also that 
\begin{equation}\label{cond_Serrin_hightest_teo_familia}
\Hc_{\cyl}(y)\geq \modulo{H(\Phi(y,\varphi(y)))} \ \forall \ y\in\partial\W.
\end{equation}
Then, for each $\sigma\in[0,1]$ and each $u\in\cl^2(\W)\cap\cl^1(\overline{\W})$ satisfying
\begin{equation}\label{ProblemaP_gradiente}
\left\{\begin{array}{l}
\Q_{\sigma} u =  0\ \mbox{ in }\ \W, \\
 u  =  \sigma \varphi \ \mbox{ in }\  \partial\Omega.
\end{array}\right.
\end{equation}
we have
$$
\sup\limits_{\partial\W}\norm{\nabla u}\leq C,
$$
where $C$ depends on 
$n$, $\diam(\W)$, $\sup\limits_{\partial\W} \modulo{\varphi}$ and 
$\sup\limits_{\ocyl\left(\sup\limits_{\partial\W}\modulo{\varphi}\right)}\modulo{H}$.
\end{cor}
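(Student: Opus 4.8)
The statement is the one-parameter family version of Theorem~\ref{teo_Est_altura}, and, exactly as in that theorem, the conclusion to be established is the a priori height bound $\sup\limits_{\overline{\W}}\modulo{u}\le C$, with $C$ depending only on the quantities listed; this is precisely what the barrier argument of Theorem~\ref{teo_Est_altura} produces. The plan is to run that argument essentially verbatim, replacing the operator $\Q$ by $\Q_{\sigma}$ and the boundary datum $\varphi$ by $\sigma\varphi$, for every $\sigma\in[0,1]$ at once.

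The first step is to record that $\Q_{\sigma}$ is obtained from $\Q$ by replacing $H$ with $\sigma H$ in the curvature term, and to check that the three hypotheses of Theorem~\ref{teo_Est_altura} are inherited by $\sigma H$. From~\eqref{eq:HY_height_familia} one gets $\escalar{\overline{\nabla}(\sigma H)}{Y}_{\oM}=\sigma\escalar{\nH}{Y}_{\oM}\ge0$, so $\sigma H\circ\Phi$ is nondecreasing in the $z$-variable and, as for $\Q$, the maximum principle holds for $\Q_{\sigma}$. From~\eqref{cond_Serrin_hightest_teo_familia} one gets $\modulo{\sigma H(\Phi(y,\varphi(y)))}\le\modulo{H(\Phi(y,\varphi(y)))}\le\Hc_{\cyl}(y)$. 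The Ricci-type hypothesis~\eqref{cond_H_Ricci_sup_familia} is used in the proof of Theorem~\ref{teo_Est_altura} only to obtain the ``inherited'' inequality~\eqref{eq:inherits}, $\modulo{H(\Phi(x,\varphi(y(x))))}\le\Hc_{\cyl_{\dd(x)}}(x)$ for $x\in\W_0$, via Lemma~6 of~\cite{alvarez2019existence}; the single delicate point is that one must \emph{not} try to re-derive~\eqref{eq:inherits} with $\sigma H$ in place of $H$ (the Ricci inequality for $\sigma H$ fails in general when $\sigma<1$), but instead keep~\eqref{eq:inherits} for the original $H$ and weaken it by $\sigma\le1$ to $\modulo{\sigma H(\Phi(x,\varphi(y(x))))}\le\Hc_{\cyl_{\dd(x)}}(x)$; similarly the choice of $\mu$ from Theorem~\ref{teo_Est_altura} still works because $\modulo{\sigma H}\le\sup\limits_{\ocyl\left(\sup\limits_{\partial\W}\modulo{\varphi}\right)}\modulo{H}$.

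With these remarks in hand, I would take $\phi$ as in~\eqref{eq:Phi} and set $w=\phi\circ\dd+\sup\limits_{\partial\W}\modulo{\varphi}$, noting $u\le w$ on $\partial\W$ since $u=\sigma\varphi$ there, $\phi(0)=0$ and $\sup\limits_{\partial\W}\modulo{\varphi}\ge\modulo{\sigma\varphi}$ on $\partial\W$. Arguing by contradiction, if $u-w$ had a positive maximum $m$ at some $x_0\in\W$, Serrin's argument (as in Theorem~8 of~\cite{alvarez2019existence}) would force $x_0\in\W_0$; evaluating $\Q_{\sigma}w$ on $\W_0$ through~\eqref{eq_transformacao_cte} with $\sigma H$ in place of $H$, then using monotonicity of $\sigma H\circ\Phi$ together with $w\ge\varphi(y(x))$, then the inherited bound $\modulo{\sigma H(\Phi(x,\varphi(y(x))))}\le\Hc_{\cyl_{\dd}}(x)$, and finally the same $\mu$, one reaches, step for step as in Theorem~\ref{teo_Est_altura},
\[
\Q_{\sigma}w\le-n\big(\modulo{\sigma H(\Phi(x,\varphi(y(x))))}+\sigma H(\Phi(x,\varphi(y(x))))\big)\big(\rho^{-2}+\phi'^2\big)^{3/2}\le0=\Q_{\sigma}u\quad\text{in }\W_0 .
\]
Since~\eqref{eq:HY_height_familia} also gives $\Q_{\sigma}(w+m)\le\Q_{\sigma}w\le\Q_{\sigma}u$ while $u\le w+m$ with equality at $x_0$, the maximum principle forces $u\equiv w+m$ on $\W_0$, which is absurd because $u=\sigma\varphi<\sup\limits_{\partial\W}\modulo{\varphi}+m=w+m$ on $\partial\W$. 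Hence $u\le w$ on $\overline{\W}$, and the symmetric argument, with the barrier $-w$ and the minimum principle, gives $u\ge-w$; therefore $\sup\limits_{\overline{\W}}\modulo{u}\le\phi(\diam(\W))+\sup\limits_{\partial\W}\modulo{\varphi}$, a bound depending only on $n$, $\diam(\W)$, $\sup\limits_{\partial\W}\modulo{\varphi}$ and $\sup\limits_{\ocyl\left(\sup\limits_{\partial\W}\modulo{\varphi}\right)}\modulo{H}$ (the last entering through $\mu$). The only real obstacle is the bookkeeping flagged above---recognising that the geometric monotonicity input~\eqref{eq:inherits} must be invoked for $H$ and only afterwards weakened by $\sigma\le1$; everything else is a line-by-line transcription of the proof of Theorem~\ref{teo_Est_altura} with $H$ replaced by $\sigma H$.
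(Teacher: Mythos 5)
Your proposal is correct and follows essentially the same route as the paper: the same barrier $w=\phi\circ\dd+\sup_{\partial\W}\modulo{\varphi}$, the same contradiction argument forcing $x_0\in\W_0$, the inherited inequality \eqref{eq:inherits} invoked for $H$ itself (not $\sigma H$) and only then weakened by $\sigma\le 1$, leading to $\Q_{\sigma}w\le -n\sigma\left(\modulo{H}+H\right)\left(\rho^{-2}+\phi'^2\right)^{3/2}\le 0$ and the maximum-principle contradiction at the boundary, exactly as in the paper's derivation from \eqref{paraCorolario}. Your reading of the conclusion as the height bound $\sup_{\overline{\W}}\modulo{u}\le C$ also matches what the paper's proof actually establishes.
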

\begin{proof}
Let $u$ be a solution of \eqref{ProblemaP_gradiente}. Being $w$ the function defined in the proof of the previous theorem we suppose, by contradiction, that $u-w$ attains a maximum $m>0$ at $x_0\in{\W}$, that is,
\begin{equation}
\left\{
\begin{array}{l}
u\leq w + m   \mbox{ in } \W\\
u(x_0) = w(x_0) + m. 
\end{array}\right.
\label{eq:maximum}
\end{equation} 
We already know that $x_0$ must be in $\W_0$. 

On the other hand, 
exactly as we derived \eqref{paraCorolario}, we obtain
\begin{align*}
\Q_{\sigma} w 
\leq &
- n \left(\modulo{H\left(\Phi(x,\varphi(y(x)))\right)}
+ \sigma H\left(\Phi\left(x,\varphi(y(x))\right)\right)\right)\left(\rho^{-2}+\phi'^2\right)^{3/2}\\
\leq &
- n \sigma \left(\modulo{H\left(\Phi(x,\varphi(y(x)))\right)}
+ H\left(\Phi\left(x,\varphi(y(x))\right)\right)\right)\left(\rho^{-2}+\phi'^2\right)^{3/2}\\
\end{align*}
Therefore, 
\begin{equation}
\Q_{\sigma} (w+m) \leq \Q_{\sigma} w \leq 0 = \Q_{\sigma} u. 
\label{eq:maximum_2}
\end{equation} 

The maximum principle in combination with \eqref{eq:maximum} and \eqref{eq:maximum_2} implies that $u = w+m$ in $\W_0$, being that 
$$ u = \sigma \varphi\leq\sup_{\partial\W}\modulo{\varphi} = w < w+m .$$ 
Therefore $u\leq w$ in $\overline{\W}$. 

\end{proof}

\section{Proof of Theorem \ref{T_exist_Ricci}}

Under the assumptions of Theorem \ref{T_exist_Ricci} we have an a priori global estimate in $\cl^{1}(\overline{\W})$ for the solutions of the related problems 
$$
\left\{\begin{array}{r}
W^2\Delta u - \Hess u(\nabla u,\nabla u)-\rho^{-2}\left(\rho^{-2}+W^2\right)\escalar{\cYY}{\nabla u} = \sigma nH(\Phi(x,u))W^3 \ \mbox{ in }\ \W, \\
      u = \sigma \varphi \ \mbox{ in }\  \partial\Omega, 
\end{array}\right.
$$
(independent on $\sigma$ and $u$) in view of Theorem \ref{teo_Est_global_gradiente} and Corollaries \ref{teo_Est_gradiente_fronteira_familia} and \ref{teo_Est_altura_familia}. The Leray-Schauder fixed point theorem and a theorem of Ladyzhenskaya and Ural`tseva \cite[Th. 13.7]{GT} classically applied guarantee the existence of a solution of the Dirichlet problem
\begin{equation}\label{ProblemaP}
\left\{\begin{array}{r}
W^2\Delta u - \Hess u(\nabla u,\nabla u)-\rho^{-2}\left(\rho^{-2}+W^2\right)\escalar{\cYY}{\nabla u} = nH(\Phi(x,u))W^3 \ \mbox{ in }\ \W, \\
      u = \varphi \ \mbox{ in }\  \partial\Omega.
\end{array}\right.
\end{equation}

Under condition \eqref{eq:HY_intro} the operators $\Q$ and $\Q_{\sigma}$ satisfy the maximum principle. Therefore, there exists a unique solution to \eqref{ProblemaP}.

\bibliographystyle{amsplain}
\bibliography{bibliografia}

%
%

\end{document}